\definecolor{brickred}{rgb}{0.8, 0.25, 0.33}
\newcommand{\bbP}{{\mathbb P}}
\newcommand{\bbR}{{\mathbb R}}
\newcommand{\Bl}{\operatorname{Bl}}
\newcommand{\ord}{{\mathrm{ord}}}
\theoremstyle{plain}
\newtheorem{theorem}[subsection]{Theorem}
\newtheorem{definition}[subsection]{Definition}
\newtheorem{lemma}[subsection]{Lemma}
\newtheorem{proposition}[subsection]{Proposition}
\theoremstyle{remark}
\newtheorem{remark}[subsection]{Remark}
\title[K-stability]{On K-stability of $\mathbb P^3$ blown up along a $(2,3)$ complete intersection}
\author{Luca Giovenzana}
\address[Luca Giovenzana]{Department of Mathematical Sciences, Loughborough University,
Loughborough,
Leicestershire,
LE11 3TU, United Kingdom}
\email{l.giovenzana@lboro.ac.uk}
\author{Tiago Duarte Guerreiro}
\address[Tiago Duarte Guerreiro]{Department of Mathematical Sciences, University of Essex,
Wivenhoe Park, Colchester CO4 3SQ, United Kingdom}
\email{t.duarteguerreiro@essex.ac.uk}
\author{Nivedita Viswanathan}
\address[Nivedita Viswanathan]{School of Mathematical Sciences, University of Nottingham, Nottingham, Nottinghamshire, NG9 2RD, United Kingdom}
\email{Nivedita.Viswanathan@nottingham.ac.uk}
\subjclass[2020]{%
14J45, 
32Q20 
}
\keywords{K-stability, Fano threefolds}
\thanks{Luca Giovenzana was supported by Engineering and Physical Sciences
Research Council (EPSRC) New Investigator Award EP/V005545/1 "Mirror
Symmetry for Fibrations and Degenerations".}
\thanks{Tiago Duarte Guerreiro was partially supported by Engineering and Physical Sciences
Research Council (EPSRC)  EP/V048619/1 and EP/V055399/1.}
\thanks{Nivedita Viswanathan was supported by the Engineering and Physical Sciences
Research Council (EPSRC) New Horizons Grant EP/V048619/1.}
\begin{document}

\begin{abstract}	
We prove K-stability of every smooth member of the family 2.15 of the Mukai-Mori classification.
\end{abstract}

\maketitle

\section{Introduction}\label{section introduction}
\thispagestyle{empty}


The existence of a K\"ahler-Einstein metric on a compact manifold $X$ is a foundational problem in complex geometry. In the seminal series of papers \cite{CDS1,CDS2,CDS3, Tian}, the authors prove that such an existence has an algebro-geometric characterisation, known as K-polystability, and hence solving the famous Yau-Tian-Donaldson conjecture. From then on, a great deal of work has been carried out to verify K-stability of Fano manifolds. Of particular importance is the work of Abban and Zhuang \cite{AZ21} where the authors introduce a new powerful inductive framework. These new techniques have been most notably used in \cite{ACCFKMGSSV} where the 105 families of smooth Fano 3-folds have been analysed. Despite the very thorough investigation, the K-stability of a few smooth members of certain families is still yet to be described. In recent times, there has been much progress made in this regard. See \cite{LiuXu19, Kentorank3deg28, denisova2022kstability,cheltsov2022kstable,CheltsovPark2022,belousov2022kstability, Liu-Rank2Deg14,cheltsov2023kstable, CheltsovFujitaKishimotoPark,Malbon}. Our paper aims at giving yet another step in the full understanding of the K-stability of smooth Fano 3-folds by looking at every smooth member of family 2.15. Our main result is a complete answer in that case:

\begin{theorem}[Main Theorem. See Theorem \ref{thm:main}]
Every smooth member of the Fano family 2.15, which is the blow-up of $\mathbb P^3$ in a curve given by the intersection of a quadric and a cubic, is K-stable.
\end{theorem}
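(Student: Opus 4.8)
The plan is to prove the stronger statement that the stability threshold $\delta(X)>1$, which by the valuative criterion for uniform K-stability (Fujita--Li, Blum--Jonsson) implies that $X$ is K-stable. Write $\pi\colon X=\Bl_C\mathbb{P}^3\to\mathbb{P}^3$ for the blow-up along the smooth curve $C=Q\cap S$, which has degree $6$ and genus $4$; let $E$ be the exceptional divisor and $H=\pi^*\mathcal{O}_{\mathbb{P}^3}(1)$. Then $-K_X=4H-E$, and using $H^3=1$, $H^2E=0$, $HE^2=-6$, $E^3=-30$ one computes $(-K_X)^3=22$; moreover $A_X(E)=2$. The three surfaces that organise the whole argument are $E$ (a ruled surface over $C$), the strict transform $\widetilde Q\in|2H-E|$ of the quadric, and the strict transform $\widetilde S\in|3H-E|$ of the cubic. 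Since $\delta(X)=\inf_{p\in X}\delta_p(X)$, it suffices to establish $\delta_p(X)>1$ for every point $p\in X$.

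To bound $\delta_p(X)$ from below I would apply the Abban--Zhuang inductive estimate along a complete flag $X\supset Y\supset Z\ni p$, with $Y$ a surface, $Z$ a curve and $p$ a point, giving
\[
\delta_p(X)\ \geq\ \min\left\{\frac{A_X(Y)}{S_X(Y)},\ \frac{A_Y(Z)}{S\!\left(W^Y_{\bullet,\bullet};Z\right)},\ \frac{A_Z(p)}{S\!\left(W^{Y,Z}_{\bullet,\bullet,\bullet};p\right)}\right\},
\]
where $S_X(Y)=\tfrac{1}{22}\int_0^{\infty}\mathrm{vol}(-K_X-tY)\,dt$ and the two refined quantities are the usual integrals of restricted volumes attached to the induced flag. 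The point $p$ falls naturally into cases: whether $p\in E$ or $p\notin E$, and within each whether $p$ lies on $\widetilde Q$, on $\widetilde S$, or on one of the distinguished intersection curves $E\cap\widetilde Q$, $E\cap\widetilde S$, $\widetilde Q\cap\widetilde S$. For a general $p$ one may take $Y$ to be the strict transform of a general plane through $\pi(p)$ and $Z$ a general curve in it, and the estimate is easy; the real content is concentrated on the special loci just listed.

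The computational core is, for each chosen $Y$, to produce the Zariski decomposition of $-K_X-tY$ as a function of $t$, together with the decompositions it induces on $Y$, and then to integrate. For $Y=E$ this requires the nef and pseudo-effective thresholds of $4H-sE$: testing against the fibres of $E\to C$ and against strict transforms of lines meeting $C$ shows that the first wall sits at $s=4/3$, produced by the trisecant lines of $C$. These trisecants are exactly the lines of the two rulings of the quadric $Q$ (each meeting $C=Q\cap S$ in $3$ points, since a line meeting the quadric thrice must lie on it), and the plane-section conics of $Q$ give the same threshold; beyond the wall the negative part is supported on $\widetilde Q$. Integrating the resulting piecewise-polynomial volume yields $S_X(E)$ and hence $A_X(E)/S_X(E)=2/S_X(E)>1$. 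Analogous chamber computations give $S_X(\widetilde Q)$ and $S_X(\widetilde S)$ (both with $A_X=1$), while the refined invariants $S(W^Y_{\bullet,\bullet};Z)$ and $S(W^{Y,Z}_{\bullet,\bullet,\bullet};p)$ are read off from the geometry of the ruled surface $E$ over the genus-$4$ curve $C$ and from how strict transforms of planes cut the rulings and the other distinguished curves out of $\widetilde Q$ and $\widetilde S$.

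The hardest part will be the bookkeeping on these one-dimensional loci. The estimates are sharpest for points lying on the trisecant rulings inside $\widetilde Q$ and on the curves $E\cap\widetilde Q$, $E\cap\widetilde S$, where several special surfaces pass through $p$ simultaneously, the Zariski decomposition changes chamber, and the volume integrals become genuinely piecewise. The main obstacle is therefore twofold: first, to enumerate and control every curve that could minimise $A/S$ — the rulings of $Q$, the lines on the cubic $S$, and the low-degree curves on $E$ — and second, to carry out the chamber-by-chamber integration carefully enough to confirm that each term in the Abban--Zhuang minimum strictly exceeds $1$ at these worst points. Once the finitely many special configurations are shown to satisfy $\delta_p(X)>1$ and the generic estimate covers the open complement, we conclude $\delta(X)>1$, and hence K-stability of every smooth member of family 2.15.
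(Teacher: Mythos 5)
Your high-level frame --- Abban--Zhuang induction over flags, stratifying $X$ by the position of $p$, and reducing K-stability to uniform lower bounds $\delta_p(X)>1$ on each stratum --- is the same as the paper's, and your peripheral numerics are right ($(-K_X)^3=22$, $A_X(E)=2$, the wall of $4H-sE$ at $s=4/3$ cut out by the trisecant rulings of $Q$, negative part supported on $\widetilde Q$ beyond it). The fatal gap is the sentence ``for a general $p$ one may take $Y$ to be the strict transform of a general plane through $\pi(p)$ and $Z$ a general curve in it, and the estimate is easy.'' It is not easy; it fails. Take $p\notin E\cup\widetilde Q$, let $S$ be the strict transform of a general plane through $\alpha(p)$ (the blow-up of $\mathbb P^2$ at six points on the conic $\widetilde C\sim 2l-\sum_{i=1}^{6}E_i$, a weak del Pezzo of degree $3$) and let $\ell\sim l$ be the strict transform of a general line through $\alpha(p)$. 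With $P(u)=(4-u)H-E$ on $[0,1]$, $P(u)=(6-3u)H+(u-2)E$ and $N(u)=(u-1)\widetilde Q$ on $[1,2]$, the divisor $P(u)\vert_S-v\ell$ stays pseudo-effective up to $t(u)=2-u$, with $P(u,v)\cdot\ell=4-u-v$ before $\widetilde C$ enters the negative part and $P(u,v)\cdot\ell=3(2-u-v)$ after; since $p\notin\widetilde C$ one has $F_p=0$, and hence
\[
S\bigl(W^{S,\ell}_{\bullet,\bullet,\bullet};p\bigr)=\frac{3}{22}\int_0^{2}\!\!\int_0^{2-u}\bigl(P(u,v)\cdot\ell\bigr)^2\,dv\,du=\frac{3}{22}\cdot\frac{28}{3}=\frac{14}{11}>1,
\]
so the point term in the Abban--Zhuang minimum is $11/14<1$ and the flag proves nothing. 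This is exactly why the paper does something structurally different off $E\cup\widetilde Q$: it uses the second contraction $\beta\colon X\to V_3\subset\mathbb P^4$ (given by cubics through $\mathscr C$), takes $Y$ to be the strict transform of a general hyperplane section of the cubic threefold $V_3$ through $\beta(p)$ (so that $S_X(Y)=14/33$ rather than $23/44$), and replaces actual curves through $p$ by \emph{infinitesimal} flags: the exceptional curve of the blow-up of $Y$ at $p$ when the tangent hyperplane curve is nodal, of a $(2,3)$-weighted blow-up when it is cuspidal, and the exceptional $\mathbb P^2$ of the blow-up of the threefold $X$ itself at $p$ when it is three concurrent lines (the paper states explicitly that in this last case blowing up the surface is not enough). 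Your proposal contains neither $V_3$ nor any divisor over $X$ or over $Y$, and the true bounds ($176/161$, $220/207$, $22/17$, and $132/131$ at the worst points of $E$) are so close to $1$ that there is no slack for cruder choices.

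A second genuine omission: you never separate the case in which $Q$ is a quadric cone from the smooth case, while the paper must and does. At the vertex $p$ of the cone, $\widetilde Q$ has an $A_1$ singularity, there is no pair of rulings through $p$ to build your flag from, and the paper argues via the ordinary blow-up of $X$ at $p$ together with Fujita's criterion, getting $\delta_p(X)=11/10$; points of the cone away from the vertex again require an infinitesimal flag over a plane section (bound $44/43$). Finally, at points of $E$ --- where the global minimum $132/131$ is attained --- the paper does not use your surface $E$ (a ruled surface over the genus-$4$ curve, whose effective cone you would have to control) but a plane section through $\alpha(p)$, with one of the six exceptional $(-1)$-curves as the flag curve; likewise your surface $\widetilde S\in|3H-E|$ never plays a role. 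In short: right framework and correct easy computations, but the flags you specify either demonstrably fail (general points) or are left unspecified precisely on the loci where the theorem is decided, and the missing ingredients --- the $V_3$-side geometry and infinitesimal flags --- constitute the actual content of the paper's proof.
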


\subsection{Structure of the paper:}
In Section~\ref{section: AB Theory} we recall the preliminaries and the result from Abban-Zhuang theory that we use to prove the result.
In Section~\ref{section: computations}, after a brief presentation of the smooth members of the family, we show the main theorem by estimating the local stability threshold $\delta_p$. The computations are split according to the position of the point $p$. Particular care has to be taken when the unique quadric containing the blown-up curve is singular.

\subsection*{Acknowledgements:} We are grateful for the research environment that we were provided with during EDGE Days 2021 and 2022 as well as  CIRM-FBK for sponsoring the SinG School at the University of Trento. We also thank Ivan Cheltsov for suggesting the problem to us and his constant guidance. Moreover, we want to thank Hamid Abban, Erroxe Etxabarri Alberdi, Franco Giovenzana, Kento Fujita and Alan Thompson for numerous discussions and the interest shown.

\section{Abban-Zhuang theory }\label{section: AB Theory}

In this section we recall the definition of K-stability and the main results used in order to prove Theorem~\ref{thm:main}.

\begin{definition}
Let $\Delta$ be an effective $\mathbb{Q}$-divisor on a normal projective variety $X$ for which $K_X+\Delta$ is $\mathbb{Q}$-Cartier. We say that $(X,\Delta)$ is a \textbf{log Fano pair} if $(X, \Delta)$ is klt and $-(K_X + \Delta)$ is ample. If $\Delta = 0$, we call $(X, 0)$ a Fano variety and denote it by $X$.
 \end{definition}

We recall the notion of stability threshold (or $\delta$-invariant) introduced in \cite{kentoodaka}.
 \begin{definition}
 Let $(X,\Delta)$ be a log Fano pair, and let $f : Y \rightarrow X $ be a projective birational morphism such that $Y$ is normal and let $E$ be a prime divisor on $Y$. Let $L$ be an ample $\mathbb Q$-Cartier divisor on $X$. We set,
\[
A_{X,\Delta}(E)=1+\mathrm{ord}_E\big(K_Y-f^*(K_X+\Delta)\big), \quad S_L(X)=\frac{1}{L^n}\int_0^{\infty} \mathrm{vol}(f^*(L)-uE)du.
\]
We define the \textbf{stability threshold} as
\[
\delta(X,\Delta;L)=\inf_{E/X}\frac{A_{X,\Delta}(E)}{S_L(X)}
\]
where the infimum runs over all prime divisors over $X$. For a point $p \in X$, we define the \textbf{local stability threshold} as
\[
\delta_p(X,\Delta;L)=\inf_{\substack{E/X \\ p \in C_X(E)}}\frac{A_{X,\Delta}(E)}{S_L(X)}
\]
where the infimum runs over all prime divisors over $X$ whose centres on $X$ contain $p$.
 \end{definition}
 
It is proved in \cite{kentoodaka,blumjonsson} that the following equivalence holds,
\[
\delta(X)>1 \iff X\,\, \text{is K-stable}.
\]
We will, in fact, take this to be our definition of K-stability of a Fano variety.  Moreover,
\[
\delta(X,\Delta;L)= \inf_{p \in X}\delta_p(X,\Delta;L).
\]
\begin{definition}[{\cite[Definition~1.1]{kentoplt}}]
    Let $\Delta$ be an effective $\mathbb Q$-divisor on $X$ and $(X, \Delta)$ be a klt pair. A prime divisor $Y$ over $X$ is said to be of \textbf{plt-type} over $(X, \Delta)$ if there is a projective birational morphism $\mu : \tilde X \rightarrow X$ with $Y \subset \tilde X$ such that $-Y$ is a $\mu$-ample $\mathbb Q$-Cartier divisor on $\tilde X$ for which $(\tilde X,\tilde \Delta+ Y)$ is a plt pair where 
 the $\mathbb Q$-divisor $\tilde \Delta$ is defined by\[
    K_{\tilde X}+ \tilde \Delta + (1-A_{X,\Delta}(Y))Y=\mu^*(K_X+\Delta). 
    \]
 \end{definition}

 \begin{remark}
     The morphism $\mu$ is completely determined by $Y$ and it is called the \textbf{plt-blowup} associated to $Y$.
 \end{remark}
In the following we study K-(semi)stability of certain Fano 3-folds $X$. We do this by employing the Abban-Zhuang theory developed in \cite{AZ21} to estimate the local stability threshold $\delta_p$ for every point in $X$. We recall the main results we need by referring to the book \cite{ACCFKMGSSV}.

Given a smooth Fano threefold $X$, so that, in particular Nef($X$)=Mov($X$), and a point $p\in X$ we consider flags $p\in Z\subset Y \subset X$ where:
\begin{itemize}
\item $Y$ is an irreducible surface with at most Du Val singularities;

\item $Z$ is a non-singular curve such that $(Y,Z)$ is plt.
\end{itemize}
We denote by $\Delta_Z$ the different of the log pair $(Y,Z)$.

For $u\in \bbR$, we consider the divisor class $-K_X-uY$ and we denote by $\tau=\tau (u)$ its pseudoeffective threshold, i.e. the largest number for which $-K_X-uY$ is pseudoffective. For $u\in[0,\tau]$, let $P(u)$ (respectively $N(u)$) be the positive (respectively negative) part of its Zariski decomposition.
Since $Y\not\subset\mathrm{Supp} (N(u))$ we can consider the restriction $N(u)\vert_Y$ and define $N'_Y(u)$ to be its part not supported on $Z$, i.e. $N'_Y(u)$ is the effective $\mathbb R$-divisor such that $Z\not\subset \mathrm{Supp}(N'_Y(u))$ defined by:
\[
N_Y(u)= d(u)Z + N'_Y(u)
\]
where $d(u):=$ord$_Z(N(u)\vert_Y)$.

We consider then  for every $u\in [0,\tau]$ the restriction $P(u)\vert_Y$ and denote by $t(u)$ the pseudoeffective threshold of the divisor $P(u)\vert_Y-vZ$, by $P(u,v)$ and $N(u,v)$ the positive and negative part of its Zariski decomposition.
Let $V^Y_{\bullet,\bullet}$ and $W_{\bullet,\bullet,\bullet}^{Y,Z}$ be the multigraded linear series defined in \cite[Page~57]{ACCFKMGSSV}.

Finally we can state the main tool we use to estimate the local $\delta_p$-invariant:
\begin{theorem}\cite[Theorem~1.112]{ACCFKMGSSV}\label{delta estimate}
\begin{align*}
\delta_p(X)\geq \mathrm{min} \left\{ \frac{1-\mathrm{ord}_p\Delta_Z}{S(W_{\bullet,\bullet,\bullet}^{Y,Z};p)},\  \frac{1}{S(V^Y_{\bullet,\bullet};Z)},\ \frac{1}{S_X(Y)}\right\}
\end{align*} where 
\begin{align}\label{surface-to-curve}
S(V^Y_{\bullet,\bullet};Z)=\frac{3}{(-K_X)^3}\int_0^{\tau}(P(u)^2 \cdot Y) \cdot \mathrm{ord}_Z(N(u)\vert_Y)du+\frac{3}{(-K_X)^3}\int_0^{\tau}\int_0^{\infty}\mathrm{vol}(P(u)\vert_Y-vZ)dvdu,
\end{align}
and 
\begin{align}\label{curve-to-point}
S(W_{\bullet,\bullet,\bullet}^{Y,Z};p)=\frac{3}{(-K_X)^3}\int_0^{\tau}\int_0^{t(u)}(P(u,v) \cdot Z)^2dvdu+F_p(W_{\bullet,\bullet,\bullet}^{Y,Z}),
\end{align}
 with 
\begin{align}\label{Fp}
F_p(W_{\bullet,\bullet,\bullet}^{Y,Z})=\frac{6}{(-K_X)^3}\int_0^{\tau}\int_0^{t(u)}(P(u,v) \cdot Z)\cdot \mathrm{ord}_p(N_Y'(u)\vert_Z+N(u,v)\vert_Z)dvdu.
\end{align}
\end{theorem}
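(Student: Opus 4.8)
The plan is to derive the estimate from the valuative description of the stability threshold together with two applications of the Abban–Zhuang inductive inequality, one for each step of the flag $p\in Z\subset Y\subset X$. First I would use \cite{blumjonsson,kentoodaka} to rewrite
\[
\delta_p(X)=\inf_{\substack{E/X\\ p\in C_X(E)}}\frac{A_X(E)}{S_X(E)},
\]
so that it suffices to bound $A_X(E)/S_X(E)$ from below, uniformly in $E$, by the displayed minimum; the whole argument then reduces to controlling the denominators $S_X(E)$ through the geometry of the flag.

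The core is the one-step refinement of \cite{AZ21}. Since $X$ is smooth and $Y\subset X$ is a prime divisor, $A_X(Y)=1$; moreover $Y$ has only Du Val (hence Gorenstein canonical) singularities, so adjunction $K_Y=(K_X+Y)\vert_Y$ introduces no different. The Abban–Zhuang adjunction inequality then bounds the threshold below by
\[
\delta_p(X)\ \geq\ \min\left\{\frac{1}{S_X(Y)},\ \delta_p\big(Y;V^Y_{\bullet,\bullet}\big)\right\},
\]
where $V^Y_{\bullet,\bullet}$ is the refinement of $-K_X$ by $Y$ and $\delta_p(Y;V^Y_{\bullet,\bullet})=\inf_{F/Y,\ p\in C_Y(F)}A_Y(F)/S(V^Y_{\bullet,\bullet};F)$ is the local threshold of the refined multigraded linear series. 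The content of this step is that a divisor $E$ over $X$ either ``sees'' $Y$, in which case $1/S_X(Y)$ controls $A_X(E)/S_X(E)$, or its trace on $Y$ is governed by the restricted series. I would then apply the same inequality a second time on $Y$ along the curve $Z$: since $(Y,Z)$ is plt with $Z$ non-singular we have $A_Y(Z)=1$, and adjunction to $Z$ now produces the different $\Delta_Z$ via $K_Z+\Delta_Z=(K_Y+Z)\vert_Z$. This splits the threshold further into $1/S(V^Y_{\bullet,\bullet};Z)$ and $\delta_p\big(Z,\Delta_Z;W^{Y,Z}_{\bullet,\bullet,\bullet}\big)$ for the doubly-refined series. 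Finally, as $\dim Z=1$, the only divisorial valuations over $Z$ centred at $p$ are the multiples of $\mathrm{ord}_p$ and $A_{Z,\Delta_Z}(\mathrm{ord}_p)=1-\mathrm{ord}_p\Delta_Z$, so the curve-level threshold equals $(1-\mathrm{ord}_p\Delta_Z)/S(W^{Y,Z}_{\bullet,\bullet,\bullet};p)$, giving the third term.

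To obtain the explicit integral formulas I would evaluate the three $S$-invariants by iterated Zariski decomposition, using that $\mathrm{Nef}(X)=\mathrm{Mov}(X)$ keeps the positive parts nef. On the threefold I decompose $-K_X-uY=P(u)+N(u)$ for $u\in[0,\tau]$, which yields $S_X(Y)$ and identifies $V^Y_{\bullet,\bullet}$ with the graded series associated to $\{P(u)\vert_Y\}$. Restricting $P(u)$ to $Y$ and decomposing $P(u)\vert_Y-vZ=P(u,v)+N(u,v)$ on the surface gives, after integrating the volumes $\mathrm{vol}(P(u)\vert_Y-vZ)$ and the intersection numbers $(P(u,v)\cdot Z)$, the formula \eqref{surface-to-curve} and the leading term of \eqref{curve-to-point}. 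The correction term $F_p$ in \eqref{Fp} arises precisely from the negative parts that the refinement discards: the part $N'_Y(u)\vert_Z$ of $N(u)\vert_Y$ not supported on $Z$, together with $N(u,v)\vert_Z$, impose an order of vanishing at $p$ not seen by the positive parts, and enter weighted by $(P(u,v)\cdot Z)$ and $\mathrm{ord}_p$ as in \eqref{Fp}.

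The main obstacle is the one-step refinement inequality itself, i.e. the comparison of $S_X(E)$ with the flag-adapted expected vanishing that underlies \cite{AZ21}. Making this rigorous requires the Newton–Okounkov body of $-K_X$ adapted to the flag and the fact that $S_X(E)$ is computed as the integral over this body of the linear weight determined by $\mathrm{ord}_E$; the inequality then follows because this weight is dominated by its values along the coordinate directions of the flag, namely the contributions of $Y$, $Z$ and $p$. The delicate bookkeeping is the adjunction at each stratum: one must track how the log discrepancy degrades from $A_X$ to $A_Y$ and then to $A_{Z,\Delta_Z}$ through the differents, and verify that the Du Val hypothesis on $Y$ and the plt hypothesis on $(Y,Z)$ supply the positivity needed for the refined series to be well-defined and for the restrictions of the Zariski decompositions to behave as claimed.
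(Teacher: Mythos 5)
The paper contains no proof of this statement: it is quoted verbatim from \cite[Theorem~1.112]{ACCFKMGSSV} (itself resting on the framework of \cite{AZ21}), so the only thing to compare against is the strategy of the cited source. Your outline --- the valuative description of $\delta_p$, two applications of the Abban--Zhuang adjunction inequality along the flag $p\in Z\subset Y\subset X$ with $A_X(Y)=1$ and $A_Y(Z)=1$, the reduction of the curve-level threshold to $\mathrm{ord}_p$ with log discrepancy $1-\mathrm{ord}_p\Delta_Z$, and the evaluation of the $S$-invariants by iterated Zariski decomposition, with the discarded negative parts $N(u)\vert_Y$ and $N(u,v)$ accounting for the correction terms in \eqref{surface-to-curve} and \eqref{Fp} --- is a faithful reconstruction of how that theorem is proved there, so your approach is essentially the same as the source's.
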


The theorem above admits a slight generalization which allows to consider not only flags of varieties in $X$, but also over $X$. In particular,
let $X$ and $Y$ be as above, in order to estimate $\delta_p$ for $p\in Y$ it turns out to be useful to consider curves over $Y$. For this, let $\sigma: \widetilde Y\to Y$ be a plt blow-up of $Y$ in $p$ and denote by $\widetilde Z$ its exceptional divisor.
We consider the linear system $\sigma^*(P(u)\vert_Y)-v \widetilde Z$ and denote by $\tilde t(u)$ its pseudoeffective threshold, i.e.
\begin{align*}
\tilde t(u)=\mathrm{max}\{v\in \mathbb{R}_{\geq 0}\ : \ \sigma^{*}(P(u)\vert_{Y})-v\widetilde Z\ \mathrm{is\ pseudoeffective}\}.
\end{align*}

For every $v\in [0,\tilde t(u)]$ we denote by $\widetilde P(u,v)$ and $\widetilde N(u,v)$ the positive and negative part of its Zariski decomposition. We also denote by $N'\vert_{\widetilde Y}(u)$ the strict transform of the divisor $N(u)\vert_Y$. 

\begin{theorem}\label{blow-up-of-surface-formula}\cite[Remark~1.113]{ACCFKMGSSV}
\begin{align*}
\delta_p(X)\geq \mathrm{min} \left\{ \mathrm{min}_{q\in\tilde Z}\frac{1-\mathrm{ord}_q\Delta_{\tilde{Z}}}{S(W_{\bullet,\bullet,\bullet}^{Y,\tilde{Z}};q)},\ 
\frac{A_Y(\tilde{Z})}{S(V^Y_{\bullet,\bullet};\tilde{Z})},\
\frac{1}{S_X(Y)}\right\}
\end{align*}
where:
\begin{equation}
\begin{aligned}\label{Rmk1.7.32-surface-to-curve}
S(V^Y_{\bullet,\bullet};\widetilde Z)=&\frac{3}{(-K_X)^3}\int_0^{\tau}(P(u)^2 \cdot Y) \cdot \mathrm{ord}_{\widetilde{Z}}(\sigma^*( N(u)\vert_Y))du\ +\\
&\frac{3}{(-K_X)^3}\int_0^{\tau}\int_0^{\infty}\mathrm{vol}\left(\sigma^*\left(P(u)\vert_Y\right)-v\widetilde Z\right)dvdu,
\end{aligned}
\end{equation}
and 
\begin{align}\label{Rmk1.7.32-curve-to-point}
S(W_{\bullet,\bullet,\bullet}^{Y,\tilde{Z}};q)=\frac{3}{(-K_X)^3}\int_0^{\tau}\int_0^{\tilde{t}(u)}(\tilde{P}(u,v) \cdot \tilde{Z})^2dvdu
&+F_q(W_{\bullet,\bullet,\bullet}^{Y,\tilde{Z}}),
\end{align}
with 
\begin{align}\label{Rmk1.7.32-FP}
F_q(W_{\bullet,\bullet,\bullet}^{Y,\tilde Z})=\frac{6}{(-K_X)^3}\int_0^{\tau}\int_0^{\tilde t(u)}(\tilde P(u,v) \cdot \tilde Z)\cdot \mathrm{ord}_q(N_{\tilde{Y}}'(u)\vert_{\tilde Z}+\tilde N(u,v)\vert_{\tilde Z})dvdu.
\end{align}
\end{theorem}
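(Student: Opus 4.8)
The plan is to obtain the bound from exactly the same two successive applications of the Abban--Zhuang refinement that prove Theorem~\ref{delta estimate}, altering only the inner step so that it is performed on the plt blow-up $\sigma\colon\widetilde Y\to Y$ in place of $Y$. Thus I would first refine $X$ along the surface $Y$ and then refine the resulting surface datum along the exceptional curve $\widetilde Z$, rather than along a curve already lying on $Y$.

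For the first step, the Abban--Zhuang estimate applied to the prime divisor $Y\subset X$ gives
\[
\delta_p(X)\ \geq\ \min\left\{\frac{A_X(Y)}{S_X(Y)},\ \delta_p\bigl(Y,\Delta_Y;\,V^Y_{\bullet,\bullet}\bigr)\right\},
\]
where $\Delta_Y$ is the different of the pair $(X,Y)$ and $V^Y_{\bullet,\bullet}$ is the refinement of $\lvert -K_X-uY\rvert$ along $Y$ whose $S$-invariant is recorded in \eqref{surface-to-curve}. Since $Y$ is a genuine prime divisor on $X$ we have $A_X(Y)=1$, which is the origin of the term $\tfrac{1}{S_X(Y)}$. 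Everything therefore reduces to bounding the local $\delta$-invariant of the refined linear series $V^Y_{\bullet,\bullet}$ on $Y$ at the point $p$.

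For the second step, instead of refining $\bigl(Y,\Delta_Y;V^Y_{\bullet,\bullet}\bigr)$ along a curve $Z\subset Y$, I would refine it along the exceptional divisor $\widetilde Z$ of $\sigma$. The key point --- and the only genuinely new input relative to Theorem~\ref{delta estimate} --- is that the surface-level Abban--Zhuang estimate is insensitive to whether the refining prime divisor lies on $Y$ or only over $Y$: one passes to the model $\widetilde Y$ on which $\widetilde Z$ is a divisor, pulls the positive parts back by $\sigma^{*}$, and reads off on $\widetilde Y$ the Zariski decomposition $\widetilde P(u,v)+\widetilde N(u,v)$ of $\sigma^{*}(P(u)\vert_Y)-v\widetilde Z$. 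This yields
\[
\delta_p\bigl(Y,\Delta_Y;V^Y_{\bullet,\bullet}\bigr)\ \geq\ \min\left\{\frac{A_Y(\widetilde Z)}{S(V^Y_{\bullet,\bullet};\widetilde Z)},\ \min_{q\in\widetilde Z}\frac{1-\mathrm{ord}_q\Delta_{\widetilde Z}}{S(W^{Y,\widetilde Z}_{\bullet,\bullet,\bullet};q)}\right\}.
\]
The numerator of the middle term is $A_Y(\widetilde Z)$ rather than $1$ precisely because $\widetilde Z$ is exceptional over $Y$ and so no longer has log discrepancy $1$; and formula \eqref{Rmk1.7.32-surface-to-curve} is obtained from \eqref{surface-to-curve} by substituting $\sigma^{*}(P(u)\vert_Y)$ for $P(u)\vert_Y$ and $\mathrm{ord}_{\widetilde Z}\bigl(\sigma^{*}(N(u)\vert_Y)\bigr)$ for $\mathrm{ord}_Z(N(u)\vert_Y)$. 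The innermost passage from the curve $\widetilde Z\subset\widetilde Y$ to a point $q$ uses that $(\widetilde Y,\widetilde Z)$ is plt, so that adjunction produces the different $\Delta_{\widetilde Z}$ and the point $q$ has log discrepancy $1-\mathrm{ord}_q\Delta_{\widetilde Z}$; the corresponding $S$-invariant \eqref{Rmk1.7.32-curve-to-point}--\eqref{Rmk1.7.32-FP} is built from $\widetilde P(u,v)$ and from the restriction to $\widetilde Z$ of the strict transform $N'\vert_{\widetilde Y}(u)$ together with $\widetilde N(u,v)$. Chaining the two displayed inequalities gives the asserted bound, the minimum over $q\in\widetilde Z$ being inherited from the fibre of $\sigma$ over $p$.

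I expect the main obstacle to be the justification that the surface-level estimate remains valid when the refining divisor lies over, rather than on, $Y$, and in particular that $\sigma^{*}(P(u)\vert_Y)-v\widetilde Z$ admits a Zariski decomposition for all $u\in[0,\tau]$ and $v\in[0,\tilde t(u)]$, so that the volume and intersection integrals in \eqref{Rmk1.7.32-surface-to-curve} and \eqref{Rmk1.7.32-curve-to-point} are well defined; the plt hypothesis on $\sigma$ is exactly what guarantees that $A_Y(\widetilde Z)$ and $\Delta_{\widetilde Z}$ enter as the correct correction terms. Once this is granted the remaining content is the bookkeeping substitution of $\sigma^{*}(P(u)\vert_Y)$ for $P(u)\vert_Y$ throughout \eqref{surface-to-curve}--\eqref{Fp}, which is routine.
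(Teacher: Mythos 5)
Your proposal cannot be checked against a proof in the paper, because the paper contains none: Theorem~\ref{blow-up-of-surface-formula} is imported verbatim from \cite[Remark~1.113]{ACCFKMGSSV} and used as a black box. That said, your reconstruction is correct and is precisely the derivation behind the cited remark: one applies the Abban--Zhuang estimate first to the prime divisor $Y\subset X$ (where $A_X(Y)=1$ accounts for the term $\tfrac{1}{S_X(Y)}$), and then again to the multigraded series $V^Y_{\bullet,\bullet}$ with respect to the exceptional divisor $\widetilde Z$ of the plt blow-up $\sigma\colon\widetilde Y\to Y$, which is legitimate because \cite[Theorem~3.2]{AZ21} is formulated for plt-type divisors \emph{over} the base and for multigraded linear series, and this is exactly why $A_Y(\widetilde Z)$ replaces $1$ and the different $\Delta_{\widetilde Z}$ of $(\widetilde Y,\widetilde Z)$ enters the point-wise term; the remaining identifications of \eqref{Rmk1.7.32-surface-to-curve}--\eqref{Rmk1.7.32-FP} are, as you say, bookkeeping on the surface $\widetilde Y$, where Zariski decompositions of pseudoeffective classes always exist.
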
\bigskip\bigskip

\section{ K-stability of the family 2.15}\label{section: computations}
 We briefly review the geometry of a smooth Fano threefold in the family 2.15. Each smooth member is a threefold of Picard number 2 obtained as the blow-up of $\bbP^3$ in a (2,3)-complete intersection, see \cite[Section~4.4]{ACCFKMGSSV} and references therein. 

 Let $ \mathscr C\subset\bbP^3$ be the complete intersection of a quadric $Q = (f_2=0)$ and a cubic $S_3 = (f_3=0)$. We are interested in the K-stability of the blow-up $X:=\Bl_{\mathscr C} \bbP^3$. We stress the fact that the quadric $Q$ can be either smooth or a quadric cone. Let $\alpha:X\to \bbP^3$ be the projection, $E$ the exceptional divisor and $\widetilde{Q}$ the strict transform of $Q$. 
The linear system of cubics vanishing along $\mathscr C$ gives a rational map:
\begin{align*}
\varphi\colon \bbP^3 & \dashrightarrow \bbP^4&\\
                  [x:y:z:w] &\mapsto [xf_2:yf_2:zf_2:wf_2:f_3].
\end{align*}
with indeterminacy locus $\mathscr C$. The blow-up $X$ is a resolution of indeterminacy of $\varphi$ fitting in the diagram
\begin{align*}
\xymatrix{
&E\subseteq X\supseteq \widetilde{Q} \ar[dr]^{\beta} \ar[dl]_{\alpha} \\
\bbP^3 \ar@{-->}[rr] && V_3\subseteq \bbP^4
}
\end{align*}
where $\beta$ contracts $\widetilde Q$ to a point and maps $X$ to a cubic threefold $V_3$ singular at the point $\beta(\widetilde Q)={[0:0:0:0:1]}$.

We denote by $H\in~$NS$(X)$ the pullback of the line bundle $\mathcal O_{\mathbb P^3}(1)$ along $\alpha$. The Neron-Severi group of $X$ is generated by $H$ and $E$ and its anti-canonical divisor is given by
\begin{align*}
-K_X= 4H-E = 2H + \widetilde Q = 2\widetilde Q + E,
\end{align*}
 where we used the equality $\widetilde{Q}=2H-E$.
We denote by $f_1\in N_1(X)$ the class of the fiber of the restriction $\alpha\vert_E\colon E\to \mathscr C$ and by $f_2\in N_1(X)$ the class of a ruling of $\widetilde Q$ so that the Mori cone is $\overline{NE}(X) = \mathbb R_{\geq 0}f_1 + \mathbb R_{\geq 0}f_2$. The intersection numbers are as follows:
\begin{align*}
    &E\cdot f_1 = \widetilde{Q}\cdot f_2 = -1,\quad E\cdot f_2 = 3\\
    &H\cdot f_2 = \widetilde{Q}\cdot f_1 = 1, \quad  H\cdot f_1=0,\\
    &H^3=1,\quad H\cdot E^2=-6,\quad H^2 \cdot E=0,\quad \mathrm{ and }\\
    &E^3 = - \deg N_{\mathscr C|\mathbb{P}^3} =-2g+2+K_{\mathbb{P}^3} \cdot \mathscr C = -30.
\end{align*}

\subsection{ Estimate of \texorpdfstring{$\delta_p$ for $p$  in $\widetilde{Q}$ when $Q$ is a smooth quadric}.}
In this section we estimate the K-stability threshold $\delta_p$ for a point $p\in \widetilde Q$ by applying Theorem~\ref{delta estimate} to a specific flag.

\begin{proposition}\label{delta: p in Q}
If $p$ is a point in $\widetilde Q$ and not in $E$, then
\begin{align*}
    \delta_p(X) = \frac{44}{37}
\end{align*}
and it is computed by the divisor $\tilde Q$ in $X$. If $p\in E\cap \widetilde Q$, then
\begin{align*}
    \delta_p(X) \geq \frac{8}{7}.
\end{align*}
\end{proposition}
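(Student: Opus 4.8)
The plan is to apply Theorem~\ref{delta estimate} to the flag $p \in Z \subset Y \subset X$ with $Y = \widetilde Q$. Since $\mathscr C$ is a Cartier divisor on the smooth quadric $Q$, blowing up $\bbP^3$ along $\mathscr C$ leaves $Q$ unchanged, so $Y = \widetilde Q \cong Q \cong \bbP^1\times\bbP^1$, with $H|_Y = (1,1)$, $E|_Y = \mathscr C$ of class $(3,3)$, and $A_X(\widetilde Q) = 1$. First I would determine the Zariski decomposition of $-K_X - u\widetilde Q = (4-2u)H - (1-u)E$. Intersecting with the generators of $\overline{NE}(X)$ gives $(-K_X - u\widetilde Q)\cdot f_1 = 1-u$ and $(-K_X - u\widetilde Q)\cdot f_2 = 1+u$, so the class is nef exactly for $u\in[0,1]$; for $u\in[1,2]$ the negative part is $N(u) = (u-1)E$ and the positive part is $P(u) = (4-2u)H$, and the pseudoeffective threshold is $\tau = 2$.

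Next I would compute the third term $1/S_X(\widetilde Q)$. Using $(-K_X)^3 = 22$ and $\mathrm{vol}(P(u)) = P(u)^3$, the two regimes give $\int_0^1 P(u)^3\,du = \tfrac{33}{2}$ and $\int_1^2 (4-2u)^3\,du = 2$, so $S_X(\widetilde Q) = \tfrac{1}{22}\cdot\tfrac{37}{2} = \tfrac{37}{44}$ and the third term equals $\tfrac{44}{37}$. Because $p\in\widetilde Q = C_X(\widetilde Q)$ and $A_X(\widetilde Q) = 1$, the same number is also an upper bound $\delta_p(X)\le A_X(\widetilde Q)/S_X(\widetilde Q) = \tfrac{44}{37}$; so it suffices to show the remaining two terms are at least $\tfrac{44}{37}$ to settle the first case.

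For the flag curve I would take $Z$ a ruling of $Y\cong\bbP^1\times\bbP^1$ through $p$, say of class $(0,1)$; then $(Y,Z)$ is log smooth, $\Delta_Z = 0$, and $Z\not\subset\mathscr C$ so $\mathrm{ord}_Z(N(u)|_Y) = 0$. Restricting, $P(u)|_Y = (1+u)(1,1)$ on $[0,1]$ and $(4-2u)(1,1)$ on $[1,2]$, whence $\mathrm{vol}(P(u)|_Y - vZ)$ and $P(u,v)\cdot Z$ are elementary; the computation yields $S(V^Y_{\bullet,\bullet};Z) = \tfrac{3}{22}(\tfrac{15}{4}+2) = \tfrac{69}{88}$, giving the second term $\tfrac{88}{69} > \tfrac{44}{37}$, while the first summand of $S(W^{Y,Z}_{\bullet,\bullet,\bullet};p)$ equals the same $\tfrac{69}{88}$. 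If $p\notin\mathscr C$ then $\mathrm{ord}_p(N'_Y(u)|_Z) = 0$, so $F_p = 0$, $S(W^{Y,Z};p) = \tfrac{69}{88}$, and the first term is again $\tfrac{88}{69}$; the minimum in Theorem~\ref{delta estimate} is therefore $\tfrac{44}{37}$, attained by $\widetilde Q$.

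For the second case $p\in E\cap\widetilde Q = \mathscr C$ the only change is that $N'_Y(u)|_Z = (u-1)\,\mathscr C|_Z$ now contributes at $p$. The key geometric point is that only one of the two rulings through $p$ can be tangent to the smooth curve $\mathscr C$, so I would choose $Z$ transversal to $\mathscr C$ at $p$, making $\mathrm{ord}_p(\mathscr C|_Z) = 1$; then $F_p = \tfrac{6}{22}\int_1^2 (4-2u)^2(u-1)\,du = \tfrac{6}{22}\cdot\tfrac13 = \tfrac1{11}$ and $S(W^{Y,Z};p) = \tfrac{69}{88}+\tfrac1{11} = \tfrac78$, so the first term becomes $\tfrac87$. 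Since $\tfrac87 < \tfrac{44}{37} < \tfrac{88}{69}$, Theorem~\ref{delta estimate} gives $\delta_p(X)\ge \tfrac87$. The main obstacle I anticipate is exactly this transversality bookkeeping: the bound $\tfrac87$ is sharp only when $\mathrm{ord}_p(\mathscr C|_Z) = 1$ (a tangential ruling would give $\mathrm{ord}_p = 2$ and the weaker $\tfrac{88}{85}$), so one must justify that a transversal ruling through every $p\in\mathscr C$ exists and keeps $(Y,Z)$ plt; the remaining Zariski-decomposition and volume computations are then routine.
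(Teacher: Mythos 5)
Your proposal is correct and takes essentially the same approach as the paper: the identical flag $p \in L \subset \widetilde Q \subset X$ with $L$ a ruling of $\widetilde Q \cong \bbP^1\times\bbP^1$ chosen non-tangent to $E\cap\widetilde Q$ at $p$, the same Zariski decompositions, and the same values $S_X(\widetilde Q)=\tfrac{37}{44}$, $S(V^{\widetilde Q}_{\bullet,\bullet};L)=\tfrac{69}{88}$, $F_p=\tfrac{1}{11}$, giving $\tfrac{44}{37}$ (attained by $\widetilde Q$) and $\tfrac{8}{7}$ respectively. The transversality issue you flag is resolved in the paper exactly as you suggest, by choosing the ruling through $p$ that is not tangent to $E\cap\widetilde Q$.
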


\begin{proof}
Given a point $p\in \widetilde Q$, we consider the flag
\begin{align*}
    p \in L \subset \widetilde Q \subset X
\end{align*}
where $L$ is a line of $\widetilde Q$ through $p$ which is not tangent to the curve $E\cap \widetilde Q$ at $p$, or equivalently, whose image under the map $\alpha$ is not tangent to $\mathscr C$ at $\alpha(p)$.

We start by computing $S_X(\widetilde Q)$. For this, we consider the linear system $K_X-u\widetilde Q = E + (2-u)\widetilde Q$ for $u\in \mathbb R$. Clearly its pseudoeffective threshold is $\tau=2$. The Zariski decomposition is given by: \footnote{Since the Zariski decomposition is defined to be $P(u)$ and $N(u)$, here it is confusing to use $P_{\widetilde Q}(u)$. Would suggest sticking to $P(u)$. }
\[P(u)=
\begin{cases}
(4-2u)H+(u-1)E & \text{if}\ u\in [0,1],\\
(4-2u)H & \text{if}\ u\in [1,2],
\end{cases}
\quad \text{and} \quad 
N(u)=
\begin{cases}
0 & \text{if}\ u\in [0,1],\\
(u-1)E & \text{if}\ u\in [1,2].
\end{cases}\]

Therefore the volume can be computed to be:
\begin{align*}
\mathrm{vol}(-K_X-u\tilde{Q})= \left(P(u)\right)^3=
\begin{cases}
22-6u-6u^2-2u^3& \text{if}\ u\in [0,1],\\
64-96u+48u^2-8u^3 & \text{if}\ u\in [1,2].
\end{cases}
\end{align*}

Hence we get:
\begin{align}\label{Flag1:3fold-to-surface}
    S_X(\widetilde Q) = \frac{1}{(-K_X)^3}\int_{0}^{\tau(\tilde{Q})}\mathrm{vol}(-K_X-u\tilde{Q})du = \frac{37}{44}.
\end{align}

We move on to compute the value $S(V^{\widetilde Q}_{\bullet,\bullet};L)$. For this, let $\ell_1, \ell_2$ the classes of the rulings of $\widetilde Q$ so that the class of $L$ is $\ell_1$, we consider for $v\in \mathbb R_{\geq 0}$ the linear system:
\begin{align*}
P(u)\vert_{\widetilde{Q}}-vL=
\begin{cases}
(1+u-v)\ell_1+(1+u)\ell_2 & \text{if}\ u\in [0,1],\\
(4-2u-v)\ell_1+(4-2u)\ell_2& \text{if}\ u\in [1,2].
\end{cases}
\end{align*}
The nefness and bigness of the above linear system is readily checked and its Zariski decomposition is given by:
\begin{align*}
P(u,v)=
\begin{cases}
(1+u-v)\ell_1+(1+u)\ell_2 & \text{if}\ u\in [0,1],\ v\in [0,1+u]\\
(4-2u-v)\ell_1+(4-2u)\ell_2& \text{if}\ u\in [1,2],\ v\in [0,4-2u],
\end{cases}N(u,v)=
\begin{cases}
0\\
0.
\end{cases}
\end{align*}
Hence
\begin{align*}
\mathrm{vol}(P(u)\vert_{\widetilde{Q}}-vL)=
\begin{cases}
2(1+u-v)(1+u) & \text{if}\ u\in [0,1],\ v\in [0,1+u]\\
4(4-2u-v)(2-u)& \text{if}\ u\in [1,2],\ v\in [0,4-2u].
\end{cases}
\end{align*}

We note that the restriction of the divisor $E$ to $\widetilde Q$ consists of an irreducible curve which is isomorphically mapped to $\mathscr C$ by the blow-up morphism $\alpha$. In particular, we see that $E\vert_{\widetilde Q}$ has no support on $L$ and the negative part $N(u)$ does not contribute in the formula \eqref{surface-to-curve} and we get:
\begin{align} \label{Flag1:surface-to-curve}
    S(V^{\widetilde Q}_{\bullet,\bullet};L) = \frac{69}{88}.
\end{align}

We move on now to compute $S(W_{\bullet,\bullet,\bullet}^{\widetilde Q,L};p)$. 

If the point $p\in \widetilde Q\setminus E$, then the order of $E\vert_{\widetilde Q}$ at $p$ is trivial, hence the value $F_p(W_{\bullet,\bullet,\bullet}^{\widetilde Q,L})$ of \eqref{Fp} is zero. A direct computation gives the value of \eqref{curve-to-point}:
\begin{align}\label{Flag1:curve-to-point}
S(W_{\bullet,\bullet,\bullet}^{\widetilde Q,L};p) = \frac{69}{88}.
\end{align}

On the other hand, if the point $p$ is in $\widetilde Q\cap E$ the value $F_p$ in \eqref{Fp} is not trivial. First of all we notice that $L$ is not contained in $E\vert_{\widetilde Q}$ so we have $N(u)=N'_{\widetilde Q}(u)$. Secondly, since in the choice of the flag we assumed that $L$ intersects $E\cap \widetilde Q$ transversely we have $\mathrm{ord}_p(N'_{\widetilde Q}(u)\vert_L) = u-1 \mbox{ if } u\in[1,2].$
For the value in \eqref{Fp} we therefore get:
\begin{align}\label{Flag1:Fp}
    F_p = \frac{1}{11}.
\end{align}

If $p\not\in E$, the values $S_X(\widetilde Q)$, $S(V^{\widetilde Q}_{\bullet,\bullet};L)$ and $S(W_{\bullet,\bullet,\bullet}^{\widetilde Q,L};p)$ are computed in the formulas \eqref{Flag1:3fold-to-surface}, \eqref{Flag1:surface-to-curve} and \eqref{Flag1:curve-to-point}, so that:
\begin{align*}
  \frac{44}{37} = \frac{1}{S_X(\widetilde Q)} \geq \delta_p(X)\geq \mathrm{min}\bigg\{\frac{44}{37},\ \frac{88}{69},\ \frac{88}{69}\bigg\} = \frac{44}{37}.
\end{align*}

If the point $p$ is in $E$, the value $S(W_{\bullet,\bullet,\bullet}^{\widetilde Q,L};p)$ is obtained by summing up also $F_p$, which is computed in \eqref{Flag1:Fp} and one gets:
\begin{align*}
    \delta_p(X)\geq \mathrm{min}\bigg\{\frac{44}{37},\ \frac{88}{69},\ \frac{8}{7}\bigg\} = \frac{8}{7}.
\end{align*}
This concludes the proof.
\end{proof}

\subsection{ Estimate of \texorpdfstring{$\delta_p$ for $p$ in $\widetilde{Q}$ when $Q$ is a quadric cone}.}

We divide the computations in two separate cases: These are when $p$ is the vertex of the quadric cone or $p$ is away from it.

\subsubsection{\texorpdfstring{$p$ is the vertex of the quadric cone}.}

Let $\pi \colon \hat{X} \rightarrow X$ be the blowup of $X$ at $p$ with exceptional divisor $G \simeq \mathbb{P}^2$. Let $\hat{Q}$ be the strict transform of $\tilde Q$ in $\hat{X}$. Since $\hat{Q}=\pi^*\tilde Q-2G$ and $-K_X=2\tilde Q+E$, we have
\begin{align}\label{eq: canonical vertex blow-up}
\pi^*(-K_X)-uG = 2\hat Q +\hat E + (4-u)G
\end{align}
 where $\hat E \simeq E$ is the strict transform of $E$ in $\hat X$.
 
 \begin{lemma}\label{lemma:bigness of divisor II}
 The pseudo-effective threshold $\tau$ of the linear system $\pi^*(-K_X)-uG$ is $\tau = 4$.
\end{lemma}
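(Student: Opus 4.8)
The plan is to prove the two bounds $\tau\ge 4$ and $\tau\le 4$ separately, the first being immediate and the second carrying the real content.

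For the lower bound I would read effectivity directly off \eqref{eq: canonical vertex blow-up}: for every $u\in[0,4]$ the class $\pi^*(-K_X)-uG=2\hat Q+\hat E+(4-u)G$ is a nonnegative combination of the prime divisors $\hat Q$, $\hat E$ and $G$, hence effective and a fortiori pseudoeffective. This already gives $\tau\ge 4$.

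The substance is the upper bound, and my strategy is to exhibit a \emph{movable} curve on $\hat X$ against which $\pi^*(-K_X)-uG$ pairs negatively as soon as $u>4$. By the duality of Boucksom--Demailly--P\u{a}un--Peternell between $\overline{\mathrm{Eff}}(\hat X)$ and the cone of movable curves, a pseudoeffective class is nonnegative on every movable curve, so producing such a curve forces $\tau\le 4$. Writing $p_0=\alpha(p)$ for the vertex of the cone $Q$ in $\bbP^3$, I would take $\hat\ell\subset\hat X$ to be the strict transform of a general line $\ell\subset\bbP^3$ through $p_0$. Since the lines through $p_0$ sweep out all of $\bbP^3$, their strict transforms cover $\hat X$, so $\hat\ell$ is movable. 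A general such $\ell$ avoids the curve $\mathscr C$ (the lines through $p_0$ meeting $\mathscr C$ form only a one-parameter subfamily inside the $\bbP^2$ of all lines through $p_0$), so the class $c'$ of its image in $X$ satisfies $H\cdot c'=1$ and $E\cdot c'=0$; by the projection formula $\pi^*(-K_X)\cdot\hat\ell=(-K_X)\cdot c'=(4H-E)\cdot c'=4$, while $\ell$ passes through $p$ with multiplicity one, giving $G\cdot\hat\ell=1$. Therefore $(\pi^*(-K_X)-uG)\cdot\hat\ell=4-u$, which is negative precisely for $u>4$.

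The points that require care, and the only places where the geometry of the cone enters, are the verifications underpinning the last paragraph: that $p\notin E$ (the vertex of $Q$ does not lie on $\mathscr C$, which is why $\hat E\simeq E$ and $E\cdot c'=0$), that a general line through $p_0$ genuinely misses $\mathscr C$ and meets $G$ transversally in a single point, and that the covering family really certifies movability. None of these is difficult, but they are exactly what legitimizes the clean identity $(\pi^*(-K_X)-uG)\cdot\hat\ell=4-u$; combined with $\tau\ge4$ they yield $\tau=4$. As an alternative one could instead run a full Zariski-decomposition and Mori-chamber analysis of $\pi^*(-K_X)-uG$, but the movable-curve test is shorter and suffices for computing the threshold.
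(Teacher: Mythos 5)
Your proposal is correct, but your proof of the upper bound $\tau\le 4$ runs along a genuinely different route than the paper's. The paper argues via bigness: since $\pi^*(-K_X)$ is big, if one had $\tau>4$ then $\pi^*(-K_X)-4G=2\hat Q+\hat E$ would be big (a positive convex combination of a big class and a pseudoeffective class), so it suffices to show $2\hat Q+\hat E$ is not big; this is done by pushing forward along the contraction $\gamma\colon\hat X\to\Bl_{\alpha(p)}\mathbb P^3$ of $\hat E$ and observing that $\gamma(\hat Q)\equiv h^*\mathcal O_{\mathbb P^2}(2)$ for the conic bundle $h\colon\Bl_{\alpha(p)}\mathbb P^3\to\mathbb P^2$ given by projection from the vertex, hence is not big. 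You instead test against a covering family of curves: the strict transforms of general lines through the vertex, which pair to $4-u$ against $\pi^*(-K_X)-uG$. The two arguments are dual views of the same geometry --- your lines are precisely the fibres of the paper's conic bundle $h$, and their pairing to zero at $u=4$ is the numerical shadow of the paper's statement that the divisor becomes a pullback from $\mathbb P^2$ after contracting $\hat E$. Your route buys elementarity: you only need the easy direction of BDPP (in fact not even that --- an effective divisor meets a general member of a covering family nonnegatively, and one passes to pseudoeffective classes by perturbing with $\epsilon A$ for an ample $A$), and it makes explicit for every $u>4$ why the class fails to be pseudoeffective, bypassing the convexity step ``not big at $u=4$ implies $\tau\le 4$'' which the paper leaves implicit. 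The side verifications you flag are genuine but easy, exactly as you say: $p\notin E$ because the vertex cannot lie on the smooth curve $\mathscr C$ (a curve cut on the cone $Q$ through its vertex would be singular there), and a general line through the vertex misses $\mathscr C$ because the lines through the vertex meeting $\mathscr C$ are exactly the rulings of $Q$, a one-dimensional family; finally, all general members of your family have the same numerical class, so the covering-family argument applies to the class of $\hat\ell$.
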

 
 \begin{proof}
From Equation~\eqref{eq: canonical vertex blow-up} we clearly we have that $\tau\geq 4$. In order to prove the equality it is enough to show that the divisor $2\hat Q +\hat E$ is not big. For this, let $\gamma\colon \hat X\to \Bl_{\alpha(p)}\mathbb P^3$ be the divisorial contraction of $\hat E$. Since the pushforward of a big divisor along a birational morphism is big, in order to show the claim it is enough to show that $\gamma_{*}\hat Q$ is not big. For this, notice that $\Bl_{\alpha(p)}\mathbb P^{3}$ is the resolution of indeterminacy of the projection from $\alpha(p)$ and is a conic bundle $h\colon\Bl_{\alpha(p)}\mathbb P^{3}\to \mathbb P^2$ which contracts $\gamma(\hat Q)$ to a conic. In particular $\gamma(\hat Q) \equiv h^*\mathcal O_{\mathbb P^2}(2)$ is not big. The claim is proven.
\end{proof}
 
 Let $l, \, f_G$ and $f_E$ be the ruling of $\hat Q$, the class in $\mathrm{Pic}(G)$ of a line of $G$ and a fibre of $E$, respectively. We have the following intersection numbers

 \begin{center}
\begin{tabular}{ c|ccc } 

 & $l$ & $f_G$ & $f_E$ \\ 
 \hline
 $\hat Q$ & $-3$ & $2$ & $1$ \\ 
 $G$ & $1$ & $-1$ & $0$ \\ 
 $\hat E$ & $3$ & $0$ & $-1$ \\ 
 
\end{tabular}
\end{center}
Moreover,
\begin{align*}
 \hat Q^2\cdot \hat E = -6, \quad  \hat Q\cdot G^{2} = -2,\quad  \hat Q^2 \cdot G = 4, \quad G^2\cdot \hat E = G \cdot \hat E^2 = 0,\\
 \hat E^{3} = -30\quad  \hat Q\cdot \hat{E}^{2} = 18,\quad \hat Q^{3} = -6, \quad \hat Q \cdot \hat E \cdot G = 0,\quad G^{3} = 1. 
\end{align*}

\begin{proposition} \label{prop: p vertex}
If $p$ is the vertex of the quadric cone $\tilde Q$, then
\begin{align*}
    \delta_p(X) = \frac{11}{10},
\end{align*}
and it is computed by the exceptional divisor $G$ corresponding to the ordinary blowup of $X$ at $p$.
\end{proposition}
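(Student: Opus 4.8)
The plan is to estimate $\delta_p(X)$ from above and below so that both bounds coincide at $11/10$. For the upper bound, I would compute the ratio $A_X(G)/S_X(G)$ directly for the ordinary blow-up divisor $G$: since $G$ is the exceptional divisor of $\pi$ over a smooth point, we have $A_X(G)=3$, and $S_X(G)=\frac{1}{(-K_X)^3}\int_0^{\tau}\mathrm{vol}\big(\pi^*(-K_X)-uG\big)du$, with $\tau=4$ supplied by Lemma~\ref{lemma:bigness of divisor II}. This integral requires the Zariski decomposition of $\pi^*(-K_X)-uG=2\hat Q+\hat E+(4-u)G$ along the interval $[0,4]$, and I expect the positive part to change form at one or more walls where either $\hat Q$ or $\hat E$ enters the negative part. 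The intersection table provided makes these volume polynomials computable piecewise, and integrating should yield $S_X(G)=30/11$, giving the upper bound $\delta_p(X)\le A_X(G)/S_X(G)=11/10$.

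For the matching lower bound I would apply Theorem~\ref{blow-up-of-surface-formula} (the over-$X$ refinement) to a flag anchored at $G$. The natural choice is to take $Y=\hat Q$ (or possibly $G$ itself) as the surface and then run the inductive Abban--Zhuang estimate, passing to a plt blow-up of the surface to produce a curve $\widetilde Z$ over which the final one-dimensional linear series lives. Concretely, I would set up the divisor $\pi^*(-K_X)-uG$, restrict its positive part $P(u)$ to the chosen surface, compute $S(V^{Y}_{\bullet,\bullet};\widetilde Z)$ via formula~\eqref{Rmk1.7.32-surface-to-curve}, and then compute $S(W^{Y,\widetilde Z}_{\bullet,\bullet,\bullet};q)$ together with the correction term $F_q$ via \eqref{Rmk1.7.32-curve-to-point} and \eqref{Rmk1.7.32-FP}. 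The three candidate quantities on the right-hand side of the theorem must each be shown to be at least $11/10$; since $1/S_X(Y)$ will typically be the binding one or else comparable, care is needed to make every term clear the threshold.

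The main obstacle I anticipate is twofold. First, the Zariski decomposition on $\hat X$ near the vertex is genuinely three-dimensional and the walls must be located correctly: because $\hat Q^3=-6$ is negative and $\hat Q\cdot\hat E^2=18$ is large, the interplay between $\hat Q$ and $\hat E$ entering $N(u)$ as $u$ grows toward $4$ is delicate, and an error in a wall location propagates into the volume integrals. Second, tracking the different $\Delta_{\widetilde Z}$ and the order-of-vanishing terms $\mathrm{ord}_q(N'_{\tilde Y}(u)\vert_{\widetilde Z}+\widetilde N(u,v)\vert_{\widetilde Z})$ in $F_q$ demands understanding how the strict transforms of $\hat Q$, $\hat E$ and $G$ meet on the plt-blown-up surface, since the vertex of the cone is a singular point of $\widetilde Q$ and the local geometry there is the source of the sharper $11/10$ (as opposed to the $44/37$ away from the vertex).

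To finish, I would combine the upper and lower bounds: the flag computation gives $\delta_p(X)\ge 11/10$, while the explicit divisor $G$ gives $\delta_p(X)\le 11/10$, forcing equality and simultaneously identifying $G$ as the divisor that computes $\delta_p$, exactly as the proposition asserts. Throughout, the normalization constant $(-K_X)^3=22$ (read off from $-K_X=4H-E$ and the stated intersection numbers) should be carried consistently so that all three $S$-values are on the same footing.
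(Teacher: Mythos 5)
Your upper bound is exactly the paper's: $A_X(G)=3$, $\tau=4$ (Lemma~\ref{lemma:bigness of divisor II}), and the Zariski decomposition of $\pi^*(-K_X)-uG$ (in which only $\hat Q$ ever enters the negative part, $N(u)=\tfrac{u-1}{3}\hat Q$ for $u\in[1,4]$, while $\hat E$ never does) gives $S_X(G)=\tfrac{30}{11}$ and hence $\delta_p(X)\le\tfrac{11}{10}$. The paper runs both bounds through the same sandwich inequality \eqref{eq:AZmain} from \cite[Corollary~4.18(2)]{Kentorank3deg28}, so the entire proof reduces to showing $\delta_q(G,\Delta_G;V^{G}_{\bullet,\bullet})\ge\tfrac{11}{10}$ for every point $q$ of $G\simeq\mathbb P^2$.

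It is in this reduction that your proposal has a genuine gap. Your candidate surfaces are ``$\hat Q$ or possibly $G$ itself''; the first is not what the paper does (the vertex is a singular point of $\tilde Q$ and no flag inside the cone is ever used), and for the second you only gesture at ``running the inductive estimate,'' without identifying the case that actually carries all the difficulty: points $q$ on the conic $C=\hat Q|_G$. There the negative part $N(u)=\tfrac{u-1}{3}\hat Q$ restricts to $\tfrac{u-1}{3}C$, and any ordinary flag $q\in L\subset G$ with $L$ a line picks this up in the correction term $F_q$: using $P(u,v)=(u-v)L$ for $u\in[0,1]$ and $P(u,v)=\bigl(\tfrac{2+u}{3}-v\bigr)L$ for $u\in[1,4]$, one finds for $L$ transverse to $C$ that
\begin{equation*}
S\bigl(W^{G,L}_{\bullet,\bullet,\bullet};q\bigr)\;=\;\frac{23}{44}+\frac{6}{22}\int_1^4\frac{u-1}{3}\cdot\frac{1}{2}\Bigl(\frac{2+u}{3}\Bigr)^{2}du\;=\;\frac{23}{44}+\frac{51}{88}\;=\;\frac{97}{88},
\end{equation*}
giving the bound $\tfrac{88}{97}<1$ --- not even enough for K-semistability, and the tangent line is worse. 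So ``care is needed'' understates the issue: ordinary flags provably fail on $C$, and some new device is required. The paper's device is the $(1,2)$-weighted blow-up $\eta\colon\hat G\to G$ at $q\in C$, with exceptional curve $F\simeq\mathbb P(1,2)$, adapted to the pair (tangent line, conic); applying \cite[Corollary~4.18(1)]{Kentorank3deg28} to it yields $\tfrac{A_G(F)}{S(V^{\hat G}_{\bullet,\bullet};F)}=\tfrac{11}{10}$ and $\inf_{q'}\tfrac{A_{F,\Delta_F}(q')}{S(W^{\hat G,F}_{\bullet,\bullet,\bullet};q')}=\tfrac{44}{37}$, hence $\delta_q\ge\tfrac{11}{10}$ on $C$, while a line flag gives $\tfrac{44}{23}$ off $C$. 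Without this weighted blow-up (or an equivalent idea), your outline cannot close the gap between what it can prove and the claimed value $\tfrac{11}{10}$.
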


\begin{proof}

     By \cite[Corollary 4.18 (2)]{Kentorank3deg28}, we have
\begin{equation} \label{eq:AZmain}
 \frac{A_X(G)}{S_X(G)} \geq \delta_p(X) \geq  \min \bigg\{\frac{A_X(G)}{S_X(G)},
\inf_{\substack{q \in G}} \delta_q(G,\Delta_G;V^{G}_{\bullet,\bullet}) \bigg\}.
\end{equation}
We compute first $ \frac{A_X(G)}{S_X(G)}$ and then show this is the bound given by the right hand side of the second inequality of \eqref{eq:AZmain}. Let $P(u)$ and $N(u)$ be the positive and negative part of $\pi^{*}(-K_X) - uG$. We have:
\begin{align*}
P(u)=
\begin{cases}
2\hat Q +\hat E + (4-u)G & \text{if}\ u\in[0,1],\\
\frac{7-u}{3}\hat Q +\hat E + (4-u)G & \text{if}\ u\in[1,4],
\end{cases}
\mbox{ and }
N(u)=
\begin{cases}
0 & \text{if}\ u\in [0,1],\\
\frac{(u-1)}{3}\hat Q & \text{if}\ u\in [1,4],
\end{cases}
\end{align*}

A direct computation gives:
 \[
 \frac{A_X(G)}{S_X(G)}=\frac{11}{10}.
 \]
We now compute $\inf_{\substack{q \in G}} \delta_q(G,\Delta_G;V^{G}_{\bullet,\bullet})$.

\begin{itemize}
    \item Suppose $q \not \in \hat Q \vert_G$.
\end{itemize}
For every such point we choose a flag $q \in L \subset G$, where $L$ is a line in $G$. Then,  by \cite[Theorem~3.2]{AZ21}
\begin{align*}
\delta_q (G,\Delta_G; W^G_{\bullet,\bullet})
\geq \mathrm{min} \left\{ \frac{1}{S(W_{\bullet,\bullet}^G;L)},\frac{1-\mathrm{ord}_q{\Delta_L}}{S(W_{\bullet,\bullet,\bullet}^{G,L};q)} \right\}.
\end{align*}
 Let $P(u,v)$ and $N(u,v)$ be the positive and negative part of $P(u)\vert_G-vL$. These are given by 
\begin{align*}
    P(u,v)=
    \begin{cases}
    (u-v)L &\text{if}\ u\in [0,1],\ v\in [0,u],\\
    \Big(\frac{2+u}{3}-v\Big)L &\text{if}\ u\in [1,4],\ v\in [0, \frac{2+u}{3}],
    \end{cases}\quad
    \text{ and}\quad  N(u,v) = 0.
\end{align*}
Notice that $\mathrm{ord}_L(N(u)\vert_{G})=0$ since $\hat Q\vert_G$ is not supported on $L$ and $\mathrm{ord}_q(N'_G(u)\vert_{L}+N(u,v)\vert_L)=0$ since $q \not \in \hat Q\vert_G$. Hence, 
\[
\frac{1}{S(W_{\bullet,\bullet}^G;L)}=\frac{1-\ord_q{\Delta_L}}{S(W_{\bullet,\bullet,\bullet}^{G,L};q)}=\frac{44}{23}.
\]

\begin{itemize}
    \item Suppose $q \in \hat Q\vert_G$.
\end{itemize}
We denote by $\eta : \hat G \rightarrow G$ the $(1,2)$-weighted blowup of $q$ with exceptional divisor $F \simeq \mathbb{P}(1,2)$. By \cite[Corollary~4.18 (1)]{Kentorank3deg28}, we have
\begin{equation} \label{eq:extra-blowup}
\delta_q(G,\Delta_{G};W^{\hat G}_{\bullet,\bullet}) \geq  \min \bigg\{\frac{A_G(F)}{S(V^{\hat G}_{\bullet, \bullet};F)},
\inf_{\substack{q' \in F \\ \eta(q')=q}} \frac{A_{F,\Delta_F}(q')}{S(W^{\hat {G},F}_{\bullet,\bullet,\bullet};q')}\bigg\}.
\end{equation}

The surface $\hat G$ has an $A_1$ singular point $q_0$ lying on $F$. Denote by $C$ the conic $\hat Q\vert_G$ and by $\ell_T$ the line tangent to $C$ at $q$. Their strict transforms $\widetilde C$ and $\widetilde{\ell_T}$ intersect $F$ at a regular point of $\hat G$. We have 
 \begin{align*}
 \widetilde{C}=\eta^*C-2F,\qquad  \widetilde{\ell_T}=\eta^*\ell_T-2F,\  \mbox{ and }\\
\widetilde{\ell_T}^2 = -1, \quad \tilde{C}^2=2, \quad F^2=-\frac{1}{2}, \quad \widetilde{\ell_T}\cdot F = 1.
\end{align*}
We consider the linear system
\[
\eta^*(P(u)\vert_G)-vF=\begin{cases}
u\widetilde{\ell_T}+(2u-v)F & \text{if}\ u \in [0,1]  ,\\
\frac{2+u}{3}\widetilde{\ell_T}+\Big(\frac{2}{3}(2+u)-v \Big)F  & \text{if}\ u \in [1,4].
\end{cases}
\]
Then, its Zariski decomposition has positive part
\begin{align*}
\tilde P(u,v)=
\begin{cases}
u\widetilde{\ell_T} + (2u-v)F & \text{if } u\in [0,1]\    v\in [0,u]\\
(2u-v)(\widetilde{\ell_T}+F)        & \text{if } u\in [0,1]\     v\in [u,2u] \\
\frac{2+u}{3}\widetilde{\ell_T} + \Big(\frac{4+2u}{3}-v\Big)F   & \text{if } u\in [1,4]\ v\in [0,\frac{2+u}{3}] \\
\frac{4+2u}{3}(\widetilde{\ell_T}+F)        & \text{if } u\in [1,4]\ v\in [\frac{2+u}{3},\frac{4+2u}{3}].
\end{cases}
\end{align*}
and negative part
\begin{align*}
\tilde N(u,v)=
\begin{cases}
0 & \text{if } u\in [0,1]    v\in [0,u]\\
(v-u)\widetilde{\ell_T}      & \text{if } u\in [0,1]    v\in [u, 2u] \\
0   & \text{if } u\in [1,4]\ v\in [0,\frac{2+u}{3}] \\
(v-\frac{2+u}{3})\widetilde{\ell_T}     & \text{if } u\in [1,4]\ v\in [\frac{2+u}{3},\frac{4+2u}{3}].
\end{cases}
\end{align*}

Notice that
\begin{align*}
\ord_F(\eta^*N(u)\vert_G)&=
\begin{cases}
    0 & \text{if}\ u \in [0,1]\\
    \ord_F \Big(\frac{u-1}{3}\eta^*C\Big) & \text{if}\ u \in [1,4]
\end{cases} 
=\begin{cases}
0 & \text{if}\ u \in [0,1]  ,\\
\frac{2}{3}(u-1) & \text{if}\ u \in [1,4].
\end{cases}
\end{align*} 

A direct computation gives
\[
\frac{A_G(F)}{S(V^{\hat G}_{\bullet,\bullet};F)}=\frac{11}{10}.
\]

We now compute the second term in formula \eqref{eq:extra-blowup}. For $u \in [0,1]$,
\begin{align*}
\mathrm{ord}_{q'}(\eta^*(N'_{\tilde G}(u)\vert_{F}+N(u,v)\vert_F))&=\mathrm{ord}_{q'}(\eta^*N(u,v)\vert_F)\\ 
&=\mathrm{ord}_{q'}((v-u)\widetilde{\ell_T}\vert_F)\\
&=\begin{cases}
0 & \text{if}\ q' \not \in \widetilde{\ell_T}  ,\\
v-u  & \text{otherwise}.
\end{cases}
\end{align*}
On the other hand, for $u \in [1,4]$,
\begin{align*}
\mathrm{ord}_{q'}(\eta^*(N'_{\tilde G}(u)\vert_{F}+N(u,v)\vert_F))
&=\mathrm{ord}_{q'}\Bigg( \frac{u-1}{3}\widetilde C\vert_F + \Big(v-\frac{2+u}{3} \Big)\widetilde{\ell_T}\vert_F\Bigg)\\
&=\begin{cases}
0 & \text{if}\ q' \not \in \widetilde{\ell_T} \cup \widetilde C  ,\\
\frac{u-1}{3} & \text{if}\ q'  \in \widetilde C  ,\\
v-\frac{2+u}{3}  & \text{if}\ q' \in \widetilde{\ell_T}.
\end{cases}
\end{align*}
Then, 
\[
S(W^{\tilde{G},F}_{\bullet,\bullet,\bullet};q')= 
\begin{cases}
\frac{23}{88} & \text{if}\ q' \not \in \widetilde{\ell_T} \cup \widetilde C  ,\\
\frac{37}{44} & \text{if}\ q'  \in \widetilde C  ,\\
\frac{23}{44}  & \text{if}\ q' \in \widetilde{\ell_T}.
\end{cases}
\]
Moreover, $A_{F,\Delta_F}(q')=1$ for every $q' \in \tilde{F}$ except when $q'$ is the $A_1$ singularity introduced by $\eta$, in which case it is $\frac{1}{2}$. Hence, 
\begin{align*}
\inf_{\substack{q' \in F \\ \eta(q')=q}} \frac{A_{F,\Delta_F}(q')}{S(W^{\tilde{G},F}_{\bullet,\bullet,\bullet};q')}&= \min \Big\{\frac{1}{23/88},\frac{1/2}{23/88},\frac{1}{23/44},\frac{1}{37/44} \Big\}\\
&=\min\bigg\{\frac{88}{23},\frac{88}{46},\frac{44}{23},\frac{37}{44} \bigg\}\\
&=\frac{44}{37}.
\end{align*}
Therefore,
\begin{equation*} 
\delta_q(G,\Delta_{G};W^{\tilde G}_{\bullet,\bullet}) \geq \min\bigg\{\frac{11}{10},   
\frac{44}{37}\bigg\}=\frac{11}{10}
\end{equation*}
for $q \in C$.

Putting together the cases, $q \not \in C$ and $q \in C$, we have indeed,
\begin{equation*} 
\delta_q(G,\Delta_{G};W^{\tilde G}_{\bullet,\bullet}) \geq \min\bigg\{\frac{11}{10},   
\frac{44}{23}\bigg\}=\frac{11}{10}.
\end{equation*}
Hence,
\[
\delta_p(X)\geq \frac{11}{10}
\]
and the claim follows.
\end{proof}

\subsubsection{The point $p$ is away from the vertex of the quadric cone.}\phantom.

Let $p$ be any point in $\tilde{Q}$ such that $\alpha(p)$ is not the vertex of $Q$. We consider the general hyperplane section $H$ of $\mathbb P^3$ containing $\alpha(p)$ and its strict transform $S$ in $X$. Then $S$ is isomorphic to the blow-up of $H$ in the six points $p_1,...,p_6$ given by $Q\cap \mathscr C$, which lie on the conic $C=Q\cap H$.

We consider the blow-up $\sigma\colon \widetilde S \to S$ in the point $p$ with exceptional divisor $F$. We denote by $\widetilde C$ the strict transform of $C$ in $\widetilde S$, by $E_1,...,E_6$ the curves lying over the points $p_1,..., p_6$ and by $L_j$ the strict transform of the line through the points $\alpha(p)$ and $p_j$ for $j=1,...,6$.

\begin{proposition} \label{prop: p not vertex}
Assume that $Q$ is a quadric cone. Let $p\in X$ be a point such that $\alpha(p)\in Q$ is away from the vertex. Then:
\begin{align*}
    \delta_p(X)\geq \frac{44}{43}
\end{align*}
\end{proposition}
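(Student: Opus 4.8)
The plan is to apply Theorem~\ref{blow-up-of-surface-formula} to the flag over $X$ given by $p\in F\subset S\subset X$, where $S$ is the strict transform of a general hyperplane through $\alpha(p)$ and $F$ is the exceptional divisor of the blow-up $\sigma\colon\widetilde S\to S$ at $p$. As $\alpha(p)\in Q\cap H=C$, the point $p$ lies on $\widetilde C=\widetilde Q|_S$, so $F$ meets the strict transform of the conic. Here $S$ is a weak del Pezzo surface of degree $3$ (the blow-up of $\mathbb P^2$ at the six conconic points $p_1,\dots,p_6=Q\cap\mathscr C$), carrying the $(-2)$-curve $\widetilde C=2\ell-\sum_j e_j$; after blowing up $p\in\widetilde C$ the surface $\widetilde S$ acquires the $(-3)$-curve $\widetilde C=2\ell-\sum_j e_j-f$, while the six lines $L_j=\ell-e_j-f$ joining $\alpha(p)$ to the $p_j$ become pairwise disjoint $(-1)$-curves, each meeting $F$ and disjoint from $\widetilde C$. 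Since $\widetilde S$ is smooth and $F\cong\mathbb P^1$, we have $A_S(F)=2$ and $\Delta_F=0$, so the first term of the formula is $\min_{q\in F}1/S(W^{S,F}_{\bullet,\bullet,\bullet};q)$.

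First I would compute the Zariski decomposition of $-K_X-uS=(4-u)H-E$ on $X$: it is nef for $u\in[0,1]$, and for $u\in[1,2]$ it has negative part $(u-1)\widetilde Q$, with pseudoeffective threshold $\tau=2$. Using $(-K_X)^3=22$ and the resulting volumes one gets $S_X(S)=\tfrac{23}{44}$, hence the third term $1/S_X(S)=\tfrac{44}{23}$. Next, restricting $P(u)$ to $S$, pulling back to $\widetilde S$ and subtracting $vF$, I determine the Zariski decomposition of $\sigma^*(P(u)|_S)-vF$; this is the heart of the computation. As $v$ increases it crosses three chambers --- first nef, then $\widetilde C$ enters the negative part, then all six $L_j$ enter as well --- and a convenient feature is that in the last chamber $\mathrm{vol}=\tfrac{16}{3}\bigl(v-\tilde t(u)\bigr)^2$, so that $\tilde t(u)=\tfrac{14-5u}{4}$ for $u\in[0,1]$ and $\tfrac{9(2-u)}{4}$ for $u\in[1,2]$. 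Evaluating the integrals in \eqref{Rmk1.7.32-surface-to-curve} yields $S(V^S_{\bullet,\bullet};F)=\tfrac74$, whence the second term $A_S(F)/S(V^S_{\bullet,\bullet};F)=\tfrac87$.

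For the curve-to-point invariant, the point-independent part $\tfrac{3}{(-K_X)^3}\int_0^2\!\!\int_0^{\tilde t(u)}(\widetilde P(u,v)\cdot F)^2\,dv\,du$ evaluates to $\tfrac{15}{22}$, and it remains to maximize $F_q$ over $q\in F$. Only the points $q_0=F\cap\widetilde C$ and $q_j=F\cap L_j$ contribute. At $q_j$ only the negative part $\widetilde N(u,v)$ contributes (through $L_j$), giving the small value $F_{q_j}=\tfrac1{66}$; at $q_0$ both the strict transform $N'_{\widetilde S}(u)$ of $(u-1)\widetilde C$ and the component $c\,\widetilde C$ of $\widetilde N(u,v)$ contribute, giving $F_{q_0}=\tfrac{13}{44}$ and therefore $S(W^{S,F}_{\bullet,\bullet,\bullet};q_0)=\tfrac{15}{22}+\tfrac{13}{44}=\tfrac{43}{44}$. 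Thus $q_0$ is the worst point and the first term equals $\tfrac{44}{43}$. Combining the three estimates,
\[
\delta_p(X)\ \geq\ \min\Bigl\{\tfrac{44}{23},\ \tfrac87,\ \tfrac{44}{43}\Bigr\}\ =\ \tfrac{44}{43}.
\]

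The main obstacle is precisely the three-chamber Zariski analysis on $\widetilde S$, which is not a (weak) del Pezzo surface because of the $(-3)$-curve: one must keep track of exactly which of $\widetilde C$ and the six disjoint $(-1)$-curves $L_j$ lie in the negative part throughout the range of $(u,v)$, and at $q_0$ correctly separate the two contributions $N'_{\widetilde S}(u)|_F$ and $\widetilde N(u,v)|_F$ to $F_{q_0}$. The computation above is carried out for $p$ with $\alpha(p)\notin\mathscr C$, so that the seven blown-up points are distinct. The remaining points, where $\alpha(p)\in\mathscr C$ and hence $p\in E\cap\widetilde Q$, force $\alpha(p)$ to coincide with one of the $p_j$ and introduce an extra $(-1)$-curve $e_j$ through the centre $p=\widetilde C\cap e_j$; these require an analogous but separate chamber analysis, which I expect to be the most delicate point of the argument.
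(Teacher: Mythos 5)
Your proposal follows essentially the same route as the paper's proof: the identical flag (the strict transform $S$ of a general hyperplane through $\alpha(p)$, blown up at $p$ with exceptional curve $F$), the same chamber-by-chamber Zariski analysis involving $\widetilde C$ and the six lines $L_j$, and the same intermediate values $S_X(S)=\tfrac{23}{44}$, $S(V^S_{\bullet,\bullet};F)=\tfrac74$, $F_{q_0}=\tfrac{13}{44}$, $F_{q_j}=\tfrac{1}{66}$, hence the same bound $\tfrac{44}{43}$ (indeed your displayed volume $\tfrac{16}{3}\bigl(v-\tilde t(u)\bigr)^2$ in the last chamber is correct, whereas the paper's printed formulas for $u\in[1,2]$ contain typos that do not affect its integrated values). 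Your closing caveat is also consistent with the paper: its proof likewise implicitly assumes $\alpha(p)\notin\mathscr C$ (so that the six distinct lines $L_j$ exist), the remaining points of $E\cap\widetilde Q$ being covered separately by Proposition~\ref{prop: p in E}.
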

\begin{proof}
The result follows from applying Theorem~\ref{blow-up-of-surface-formula} to the flag consisting of the strict transform $S$ of a hyperplane in $\mathbb P^3$, the exceptional curve $F$ in $\widetilde S$.
    
We consider the linear system $-K_X-uS$. Its Zariski decomposition is then given by
\begin{align*}
P(u)=
\begin{cases}
(4-u)H - E         & \text{if}\ u\in[0,1],\\
 (6-3u)H + (u-2)E  & \text{if}\ u\in[1,2],
\end{cases}
\mbox{ and }
N(u)=
\begin{cases}
0                  & \text{if}\ u\in [0,1],\\
(u-1)\widetilde Q  & \text{if}\ u\in [1,2],
\end{cases}
\end{align*}
A direct computation gives 
\begin{align}\label{eq: S-of-plane}
    \frac{A_X(S)}{S_X(S)}=\frac{44}{23}.
\end{align}

We consider then the linear system
\begin{align*}
D=
    \sigma^{*}(P(u)|_{S})-vF =
    \begin{cases}
    (4-u)h-\sum_{i=1}^6 E_i - vF      & \text{if } u\in [0,1],\\
     (6-3u)h-(2-u)\sum_{i=1}^6 E_i -vF     & \text{if } u\in [1,2].
    \end{cases}
\end{align*}
Its Zariski decomposition for $u\in[0,1]$ is given by

\resizebox{.99\linewidth}{!}{
  \begin{minipage}{\linewidth}
\begin{align*}
P =
\begin{cases}
D                                       & \text{if}\ v\in [0,2-2u],\\
D-a\widetilde C                         & \text{if}\ v\in [2-2u,3-u],\\
D-a\widetilde C - b\sum_{i=1}^6 L_j  & \text{if}\ v\in [3-u,\frac{1}{4}(14-5u)].
\end{cases}
\mbox{ and }
N =
\begin{cases}
0                                        & \text{if}\ v\in [0,2-2u],\\
a\widetilde C                            & \text{if}\ v\in [2-2u,3-u],\\
a\widetilde C + b\sum_{i=1}^6 L_j     & \text{if}\ v\in [3-u,\frac{1}{4}(14-5u)].
\end{cases}
\end{align*}  \end{minipage}
}

where $a=\frac{1}{3}(v+2u-2)$ and $b=v-3+u$. For $u\in [1,2]$ it is given by

\resizebox{.99\linewidth}{!}{
  \begin{minipage}{\linewidth}
\begin{align*}
P =
\begin{cases}
D - a\widetilde C                             & \text{if}\ v\in [0,4-2u],\\
D -a\widetilde C - b \sum_{j=1}^6 L_j      & \text{if}\ v\in [4-2u,\frac{1}{4}(18-9u)].
\end{cases}
\mbox{ and }
N =
\begin{cases}
a\widetilde C                                        & \text{if}\ v\in [0,4-2u],\\
a\widetilde C + b \sum_{j=1}^6 L_j                            & \text{if}\ v\in [4-2u,\frac{1}{4}(18-9u)].
\end{cases}\\
\end{align*}  \end{minipage}
}

where $a=\frac{v}{3}$ and $b = v-4+2u$.
Hence, for $u\in[0,1]$ the volume of the divisor $D$ is
\begin{align*}
\mathrm{vol}(D) = P^2 =
\begin{cases}
u^2 - v^2 - 8u + 10                  & \text{if}\ v\in [0,2-2u],\\
\frac{1}{3}(7u^2 + 4uv - 2v^2 - 32u - 4v + 34)                     & \text{if}\ v\in [2-2u,3-u],\\
\frac{1}{3}(5u + 4v - 14)^2  & \text{if}\ v\in [3-u,\frac{1}{4}(14-5u)]
\end{cases}
\end{align*}

and for $u\in[1,2]$
\begin{align*}
\mathrm{vol}(D) = P^2 =
\begin{cases}
u^2 - v^2 - 8u + 10                  & \text{if}\ v\in [0,4-2u],\\
\frac{1}{3}(7u^2 + 4uv - 2v^2 - 32u - 4v + 34)                     & \text{if}\ v\in [4-2u,\frac{1}{4}(18-9u)].
\end{cases}
\end{align*}

We note that for $u\in [1,2]$ the contribution of the negative part in \eqref{Rmk1.7.32-surface-to-curve} is $\mathrm{ord}_F(\sigma^* N(u)\vert_S) = \mathrm{ord}_F((u-1)(\widetilde C+F)) = u-1$. So the value can be computed 
\begin{align}\label{eq: not vertex srf-to-crv}
    \frac{A_S(F)}{S(V^S_{\bullet,\bullet};F)} = \frac{8}{7}.
\end{align}
We now compute $S(W_{\bullet,\bullet,\bullet}^{S,F};q)$. Since $\widetilde S$ is smooth, the different $\Delta_q$ is trivial for any point $q$, while the value of $F_q(W_{\bullet,\bullet,\bullet}^{S, F})$ depends on the position of $q$ in $F$. We split thus into following three cases:
\begin{itemize}
    \item $q \notin \widetilde C\cup \bigcup_{j=1}^6 L_j$, so that $\mathrm{ord}_q(N_{\tilde{S}}'(u)\vert_{F}+\tilde N(u,v)\vert_{F}) = 0$ and $F_q=0$. And one has:
    \begin{align*}
        \frac{1-\mathrm{ord}_q\Delta_{\tilde{Z}}}{S(W_{\bullet,\bullet,\bullet}^{Y,\tilde{Z}};q)} = \frac{22}{15}.
    \end{align*}

    \item $q=\widetilde C \cap F$ so that 
    \begin{align*}
        \mathrm{ord}_q(N_{\tilde{S}}'(u)\vert_{F}+\tilde N(u,v)\vert_{F}) =\begin{cases}
            \frac{1}{3}(v+2u-2)    &\text{ if } u\in[0,1] \text{ and } v\in[2-2u, \frac{1}{4}(14-5u)],\\
            u-1+\frac{v}{3}       &\text{ if } u\in[1,2] \text{ and } v\in[0, \frac{1}{4}(18-9u)],\\
            0             &\text{ otherwise.}
        \end{cases}
    \end{align*}

    From which, one can compute:
    \begin{align*}
        F_q(W_{\bullet,\bullet,\bullet}^{S, F}) = \frac{13}{44}\quad  \mbox{ and }\quad
        \frac{1-\mathrm{ord}_q\Delta_{\tilde{Z}}}{S(W_{\bullet,\bullet,\bullet}^{Y,\tilde{Z}};q)} = \frac{44}{43}.
    \end{align*}
    
    \item $q= F\cap L_j$ for some $j=1,...,6$ so that
    \begin{align*}
        \mathrm{ord}_q(N_{\tilde{S}}'(u)\vert_{F}+\tilde N(u,v)\vert_{F}) =\begin{cases}
            v+u-3         &\text{ if } u\in[0,1] \text{ and } v\in[3-u, \frac{1}{4}(14-5u)],\\
            v-4+2u        &\text{ if } u\in[1,2] \text{ and } v\in[4-2u, \frac{1}{4}(18-9u)],\\
            0             &\text{ otherwise.}
        \end{cases}
    \end{align*}
    From which, one can compute:
    \begin{align*}
        F_q(W_{\bullet,\bullet,\bullet}^{S, F}) = \frac{1}{66}\quad \mbox{ and }\quad
        \frac{1-\mathrm{ord}_q\Delta_{\tilde{Z}}}{S(W_{\bullet,\bullet,\bullet}^{Y,\tilde{Z}};q)} = \frac{33}{23}.
    \end{align*}
\end{itemize}
Therefore,
\begin{align}\label{eq:not vertex crv-to-pnt}
\min_{q\in F}\frac{1-\mathrm{ord}_q\Delta_{F}}{S(W_{\bullet,\bullet,\bullet}^{S,F};q)}=\min\left\{\frac{22}{15}, \frac{44}{43}, \frac{33}{23} \right\}=\frac{44}{43}.
\end{align}

Finally, by combining the Equations~\eqref{eq: S-of-plane}, \eqref{eq: not vertex srf-to-crv}, and \eqref{eq:not vertex crv-to-pnt}, we get
\begin{align*}
    \delta_p(X)\geq \min \left\{\frac{44}{37}, \frac{8}{7}, \frac{44}{43} \right\}= \frac{44}{43}.
\end{align*}

\end{proof}

\subsection{ Estimate of \texorpdfstring{$\delta_p$ for a point $p$ off $E$ and $\tilde Q$}.}\phantom.

In this section, we estimate $\delta_p(X)$ for a point $p\in X \setminus (E \cup \tilde{Q})$. Roughly speaking, we consider the flag given by the general hyperplane section of $V_3$ containing $\beta(p)$ and the curve given by its tangent hyperplane section. The precise flag depends though on the singularity of the latter.

\begin{lemma}\label{S-of-H_V3}
    Let $S$ be the strict transform of a hyperplane section of $V_3$ not containing the singular point of $\beta(\widetilde Q)$.
    Then 
\begin{align*}
    S_{X}(S) = \frac{14}{33}.
\end{align*}
\end{lemma}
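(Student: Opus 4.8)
The plan is to compute $S_X(S)$ straight from the definition
\[
S_X(S)=\frac{1}{(-K_X)^3}\int_0^\tau \mathrm{vol}(-K_X-uS)\,du,
\]
so the first step is to pin down the numerical class of $S$. Since $\beta$ is resolved by the linear system of cubics through $\mathscr C$, namely $|3H-E|$, one has $\beta^*\mathcal O_{V_3}(1)=3H-E$; as the chosen hyperplane section avoids the singular point $\beta(\widetilde Q)$, its strict transform does not meet $\widetilde Q$ and hence $S\equiv 3H-E$. I would then record the reformulation $-K_X-uS=(4-3u)H+(u-1)E=H+(1-u)S$, which is what makes the Zariski decomposition transparent.

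The key step is the Zariski decomposition together with the pseudoeffective threshold. From the Mori cone $\overline{NE}(X)=\mathbb R_{\ge0}f_1+\mathbb R_{\ge0}f_2$ one computes that the nef cone is spanned by $H$ and $S=3H-E$. Thus for $u\in[0,1]$ the class $H+(1-u)S$ is a nonnegative combination of nef generators, so $P(u)=-K_X-uS$ and $N(u)=0$. For $u>1$ the divisor meets the ruling $f_1\subset E$ negatively, so its negative part is supported on $E$; imposing $P(u)\cdot f_1=0$ forces $N(u)=(u-1)E$ and $P(u)=(4-3u)H$, which is nef precisely for $u\le 4/3$. Writing $-K_X-uS=\tfrac{4-3u}{2}\widetilde Q+\tfrac{2-u}{2}E$ shows the class leaves the pseudoeffective cone exactly when the $\widetilde Q$-coefficient vanishes, giving $\tau=4/3$.

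With the decomposition in hand, the remainder is bookkeeping. Using $H^3=1$, $H^2\cdot E=0$, $H\cdot E^2=-6$, $E^3=-30$ (equivalently $H^2\cdot S=H\cdot S^2=S^3=3$) one finds
\[
\mathrm{vol}(-K_X-uS)=
\begin{cases}
1+9(1-u)+9(1-u)^2+3(1-u)^3 & u\in[0,1],\\
(4-3u)^3 & u\in[1,4/3].
\end{cases}
\]
Integrating gives $\int_0^{4/3}\mathrm{vol}(-K_X-uS)\,du=\tfrac{37}{4}+\tfrac{1}{12}=\tfrac{28}{3}$, and dividing by $(-K_X)^3=22$ yields $S_X(S)=\tfrac{14}{33}$. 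I expect the only genuinely non-formal point to be justifying the Zariski decomposition on $[1,4/3]$, i.e. that the negative part is exactly $(u-1)E$ and that $P(u)=(4-3u)H$ stays nef; this follows from the nef/Mori cone description above, after which every remaining quantity is forced by the intersection table already recorded in this section.
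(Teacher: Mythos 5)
Your proposal is correct and follows essentially the same route as the paper: the same identification $-K_X-uS=(4-3u)H+(u-1)E$, the same Zariski decomposition with $N(u)=(u-1)E$ on $[1,4/3]$, the same threshold $\tau=4/3$, and the same integral giving $\tfrac{1}{22}\cdot\tfrac{28}{3}=\tfrac{14}{33}$. The only difference is that you spell out the justifications (that $S\equiv 3H-E$, the nef cone spanned by $H$ and $3H-E$, and the pseudoeffective cone spanned by $E$ and $\widetilde Q$) which the paper leaves implicit.
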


\begin{proof}
The linear system $-K_X-uS$ can be written as
\[
-K_X-uS=\Big(2-\frac{3}{2}u\Big)\tilde{Q}+\Big(1-\frac{u}{2}\Big)E = (4-3u)H+(u-1)E.
\]
Thus its pseudoeffective threshold is $\tau(u)=\frac{4}{3}$ and its Zariski decomposition is given by:
\begin{align*}
P(u)=
\begin{cases}
(4-3u)H+(u-1)E & \text{if}\ u\in [0,1],\\
(4-3u)H & \text{if}\ u\in [1,\frac{4}{3}].
\end{cases}
\mbox{ and } N(u)=
\begin{cases}
0 & \text{if}\ u\in [0,1],\\
(u-1)E & \text{if}\ u\in [1,\frac{4}{3}].
\end{cases}
\end{align*}

Therefore, 
$\mathrm{vol}(-K_X-uS)=
\begin{cases}
22-36u+18u^2-3u^3& \text{if}\ u\in [0,1],\\
64-144u+108u^2-27u^3 & \text{if}\ u\in [1,\frac{4}{3}].
\end{cases}
$

Hence,
\begin{align}\label{3:3-fold-surface}
    S_{X}(S)&=\frac{1}{(-K_X)^3}\int_{0}^{\tau(S)}\mathrm{vol}(-K_X-uS)du=\frac{14}{33}.
\end{align}
\end{proof}

 We consider a hyperplane section $S$ of $V_3$ containing the point $\beta(p)$ and not containing the point $\beta(\widetilde Q)$, so that $S$ is a smooth cubic surface. We study the singularities of its tangent hyperplane section, because the relevant flag we use to estimate $\delta_p$ depends on them.

For an appropriate choice of coordinates $\beta(p)=(0,0,0,0)\in \mathbb C^4_{x,y,z,t}$ in a chart of $\mathbb P^{4}$ and the surface $S$ is given by:
\begin{align*}
S=
\begin{cases}
x + f_{2}(x,y,z,t)+f_{3}(x,y,z,t)= 0,\\
y=0.
\end{cases}
\end{align*}
where $f_{2}$ (respectively $f_{3}$) is a homogeneous polynomial of degree 2 (respectively of degree 3).
By considering a suitable change of variables
we might assume that no monomials containing $x$ appear in the expression of $f_{2}$ and so we have:
\begin{align*}
\mathrm{rk}(f_{2}\vert_{(y=0)})\in \{ 0,1,2\}.
\end{align*}
The tangent hyperplane section of $S$ is the curve $C$ given by:
\begin{align*}
(x=0) \cap S =
\begin{cases}
x=y=0,\\
f_{2}(0,z,t) + f_{3}(0,0,z,t)=0.
\end{cases}
\end{align*}

Therefore, the curve $C$ consists of
\begin{itemize}
\item a rational curve with a node at $\beta(p)$ if $\mathrm{rk}(f_{2})=2$;

\item a rational curve with a cusp at $\beta(p)$ if $\mathrm{rk}(f_{2})=1$;

\item three lines intersecting at $\beta(p)$ if $\mathrm{rk}(f_{2})=0$.
\end{itemize}
In each of these cases we use a different flag.

Since we are assuming that $S$ does not contain the point $\beta(\widetilde Q)$, the surface $S$ is isomorphic to its strict transform in $X$, and so is $C$. In what follows we slightly abuse notation and use the symbols $S$ and $C$ for the strict transforms as well.

\subsubsection{ Nodal curve}\phantom.

Suppose the point $p$ on $X$ is such that the curve $C$ on $V_3$ is a curve with a node at $\beta(p)$. In order to estimate $\delta_p$, we make use of Theorem \ref{blow-up-of-surface-formula}. Let $\sigma\colon \widehat{S}\to S$ be the blow-up of $S$ in $p$ with exceptional curve $G$. We denote by $\widehat C$ the strict transform of $C$ in $\widehat S$. We have the following intersection numbers:
\begin{align*}
G^2=-1,\quad G\cdot \widehat C=2, \quad \widehat C^2 = -1.
\end{align*}

\begin{proposition} \label{prop: nodal}
Suppose that $p \in X \backslash (\tilde{Q} \cup E)$ is such that $\beta(p)$ is the node of the tangent hyperplane section to the general hyperplane section of $V_3$ containing $\beta(p)$, then
\begin{align*}
    \delta_p(X) \geq \frac{176}{161}.
\end{align*}
\end{proposition}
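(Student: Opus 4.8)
The plan is to apply Theorem~\ref{blow-up-of-surface-formula} to the flag $p \in G \subset S \subset X$, where $S$ is the strict transform of the general hyperplane section of $V_3$ through $\beta(p)$ — a smooth cubic surface — and $G$ is the exceptional curve of the blow-up $\sigma\colon \widehat S \to S$ at $p$. Since $\widehat S$ is smooth, the different $\Delta_G$ vanishes and $A_S(G)=2$, so the theorem reads
\[
\delta_p(X) \geq \min\left\{ \min_{q\in G}\frac{1}{S(W^{S,G}_{\bullet,\bullet,\bullet};q)},\ \frac{2}{S(V^S_{\bullet,\bullet};G)},\ \frac{1}{S_X(S)}\right\},
\]
where $\tfrac{1}{S_X(S)} = \tfrac{33}{14}$ is already given by Lemma~\ref{S-of-H_V3}. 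I expect the two remaining terms to both equal $\tfrac{176}{161} < \tfrac{33}{14}$, which yields the stated bound.

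The computation starts from the Zariski decomposition of $-K_X-uS$ recorded in Lemma~\ref{S-of-H_V3}, whose positive part I restrict to $S$. Two facts streamline everything: the tangent hyperplane section is anticanonical, so $C \equiv -K_S = H|_S$ and $\widehat C = \sigma^* C - 2G$ (consistent with the intersection numbers $G^2 = \widehat C^2 = -1$, $G\cdot\widehat C = 2$); and since $p \notin E$, the curve $E|_S$ supporting $N(u)|_S$ avoids $p$, so $\mathrm{ord}_G(\sigma^* N(u)|_S)=0$ and the first summand of \eqref{Rmk1.7.32-surface-to-curve} drops out. I would then find the Zariski decomposition of $\sigma^*(P(u)|_S)-vG$ on $\widehat S$: the only curve entering the negative part is $\widehat C$, with wall at $v = 3-\tfrac{3}{2}u$ for $u\in[0,1]$ (resp.\ $v = 6-\tfrac{9}{2}u$ for $u\in[1,\tfrac43]$), and a short check shows $\mathrm{vol}$ vanishes exactly when $\widehat P(u,v)\cdot G = 0$, so the pseudoeffective threshold is $\widetilde t(u)=4-2u$ (resp.\ $8-6u$) and no third chamber occurs. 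Integrating the volume gives $S(V^S_{\bullet,\bullet};G)=\tfrac{161}{88}$, hence $\tfrac{A_S(G)}{S(V^S_{\bullet,\bullet};G)}=\tfrac{176}{161}$.

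For the curve-to-point term, $\Delta_G=0$, and the correction $F_q(W^{S,G}_{\bullet,\bullet,\bullet})$ is supported only where $\widehat N(u,v)|_G$ is, i.e.\ at the two points of $\widehat C\cap G$ corresponding to the two branches of the node. Integrating $(\widehat P(u,v)\cdot G)^2$ produces the main term $\tfrac{69}{88}$, while at each node-branch point one finds $F_q=\tfrac{23}{176}$, so there $S(W^{S,G}_{\bullet,\bullet,\bullet};q)=\tfrac{69}{88}+\tfrac{23}{176}=\tfrac{161}{176}$ and elsewhere it equals $\tfrac{69}{88}$. Thus $\min_{q\in G}\tfrac{1}{S(W;q)}=\tfrac{176}{161}$, attained at the branch points, and combining the three terms gives $\delta_p(X)\geq\min\{\tfrac{33}{14},\tfrac{176}{161},\tfrac{176}{161}\}=\tfrac{176}{161}$.

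The main obstacle I anticipate is pinning down the chamber structure of $\sigma^*(P(u)|_S)-vG$: one must verify that $\widehat C$ is genuinely the only negative curve encountered, i.e.\ that $p$ — although special, being a point whose tangent section is nodal — lies on none of the $27$ lines of the cubic surface $S$, and that the volume reaches zero precisely at the $G$-wall so that the two-chamber description is complete. Keeping careful track of the two branches of the node in the computation of $F_q$ is the other delicate bookkeeping point.
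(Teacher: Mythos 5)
Your proposal is correct and follows essentially the same route as the paper: the same flag $p\in G\subset S$ via the ordinary blow-up $\sigma\colon\widehat S\to S$, the same two-chamber Zariski decomposition with $\widehat C$ the only negative curve, and the same values $S(V^S_{\bullet,\bullet};G)=\tfrac{161}{88}$, main term $\tfrac{69}{88}$, and $F_q=\tfrac{23}{176}$ (the paper records this as $\tfrac{45}{352}+\tfrac{1}{352}$) at the points of $G\cap\widehat C$, giving $\delta_p(X)\geq\tfrac{176}{161}$. Your flagged concern that $p$ lies on no line of $S$ is settled by the generality of the hyperplane section, which the paper uses implicitly.
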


\begin{proof}
We apply Theorem \ref{blow-up-of-surface-formula} to the flag consisting of $p$, the exceptional curve $G$ and the strict transform of the general hyperplane section of $V_3$ through $\beta(p)$.
For this, we consider the linear system
\begin{align*}
    \sigma^{*}(P(u)\vert_{S})-vG =
    \begin{cases}
    (2-u)\widehat{C}+(4-2u-v)G & \text{if } u\in [0,1],\\
    (4-3u)\widehat{C}+(8-6u-v)G & \text{if } u\in [1,\frac{4}{3}].
    \end{cases}
\end{align*}

Its Zariski decomposition is given by
\begin{align*}
\tilde P(u,v)=
\begin{cases}
(2-u)\widehat{C} + (4-2u-v)G & \text{if } u\in [0,1]\    v\in [0,3-\frac{3u}{2}];\\
(4-2u-v)(2\widehat{C}+G)        & \text{if } u\in [0,1]\     v\in [3-\frac{3u}{2}, 4-2u] \\
(4-3u)\widehat{C} + (8-6u-v)G   & \text{if } u\in [1,\frac{4}{3}]\ v\in [0,6-\frac{9u}{2}] \\
(8-6u-v)(2\widehat{C}+G)        & \text{if } u\in [1,\frac{4}{3}]\ v\in [6-\frac{9u}{2},8-6u].
\end{cases}
\end{align*}
and by
\begin{align*}
\tilde N(u,v)=
\begin{cases}
0 & \text{if } u\in [0,1] \   v\in [0,3-\frac{3u}{2}];\\
(2v+3u-6)\widehat{C}      & \text{if } u\in [0,1] \   v\in [3-\frac{3u}{2}, 4-2u] \\
0   & \text{if } u\in [1,\frac{4}{3}]\ v\in [0,6-\frac{9u}{2}] \\
(2v+9u-12)\widehat{C}     & \text{if } u\in [1,\frac{4}{3}]\ v\in [6-\frac{9u}{2},8-6u].
\end{cases}
\end{align*}
Its volume can be directly computed to be
\begin{align}
   \mathrm{vol}(\sigma^{*}(P(u)\vert_{S})-vG)= 
    {\begin{cases}
 3u^2-v^2-12u+12 & \text{if } u\in [0,1],\ v\in [0,3-\frac{3u}{2}],\\
 12u^2+12uv+3v^2-48u-24v+48 & \text{if } u\in [0,1],\ v\in [3-\frac{3u}{2},4-2u],\\
 27u^2-v^2-72u+48 & \text{if } u\in [1,\frac{4}{3}],\ v\in [0,6-\frac{9u}{2}],\\
 108u^2+36uv+3v^2-288u-48v+192  & \text{if } u\in [1,\frac{4}{3}],\ v\in [6-\frac{9u}{2},8-6u].\\
    \end{cases}}
\end{align}

We note that \[\mathrm{ord}_{p}N(u)\vert_S=
\begin{cases}
    0 &\text{if}\, u \in [0,1],\\
    \mathrm{ord}_{p}(u-1)E\vert_{S} &\text{if}\, u \in [1,\frac{4}{3}],
\end{cases}\] and therefore $\mathrm{ord}_{p}N(u)\vert_S=0$ since $p$ is not in $E$ by assumption. Thus,
\begin{align}\label{nodal: srf-to-crv}
    S(V_{\bullet,\bullet}^{S};G)=\frac{161}{88}.
\end{align}
Since $A_{S}(G)=1+\mathrm{ord}_G(K_{\widehat{S}}-\sigma^*(K_S))=2$, we have that $\frac{A_{S}(G)}{S(V_{\bullet,\bullet}^{S};G)}=\frac{176}{161}$.

Next, we compute $S(W_{\bullet,\bullet,\bullet}^{S,G};q)$. Straightforward computations using the intersection numbers gives us the first summand in \eqref{Rmk1.7.32-curve-to-point} 
\begin{align*} 
 \frac{3}{(-K_X)^3}\int_0^{\tau}\int_0^{\tilde{t}(u)}(\tilde{P}(u,v) \cdot G)^2dvdu=
 \begin{cases}
\frac{135}{176} &\text{if} u\in [0,1],\\
\frac{3}{176} &\text{if} u\in [1,\frac{4}{3}].
\end{cases}
\end{align*}
For $u \in [0,1]$ since $N_S(u)=0$, we have that $N'_{\tilde{S}}(u)=0$. When $u \in [1,\frac{4}{3}]$, $N_{\tilde{S}}(u)=(u-1)\widetilde{E \vert_S}$, where $\widetilde{E \vert_S}$ is the strict transform of the curve $E\vert_S$ on $\widehat{S}$. Since by assumption $ p\notin E$,  we have $N_{\tilde{S}}(u) \vert_G=0$. We have different cases depending on the position of the point $q$.

If $q \in G \cap \widehat{C}$ 
    \begin{align*}
 F_q(W_{\bullet,\bullet,\bullet}^{S, G})=
 \begin{cases}
 0 &\text{if}  u\in [0,1], v \in [0, 3-\frac{3u}{2}],\\
 \frac{45}{352}&\text{if} u\in [0,1], v \in [ 3-\frac{3u}{2},4-2u], \\
0&\text{if} u\in [1,\frac{4}{3}], v \in [0,6-\frac{9u}{2}],\\
 \frac{1}{352} &\text{if} u\in [1,\frac{4}{3}], v \in [6-\frac{9u}{2},8-6u].
 \end{cases}
    \end{align*}
and 

If $q \in G \backslash{\widehat {C}}$
\[
F_q(W_{\bullet,\bullet,\bullet}^{S,G})=0.
\]

The value in \eqref{Rmk1.7.32-curve-to-point} is then given by:
\begin{align*}
    S(W_{\bullet,\bullet,\bullet}^{S,G};q)=\frac{161}{176} \mbox{ when } q \in G \cap \widehat{C} \mbox{ and}\\
    S(W_{\bullet,\bullet,\bullet}^{S,G};q)=\frac{69}{88} \mbox{ when } q \in G \backslash{\widehat{C}}
\end{align*}
Since the surface $\widehat S$ is smooth, the different $\Delta_G$ is trivial and we get:  
\begin{align}\label{nodal: crv-to-pnt}
\mathrm{min}_{q\in G}\frac{1-\mathrm{ord}_q\Delta_{G}}{S(W_{\bullet,\bullet,\bullet}^{S,G};P)}=\frac{176}{161}.
\end{align}

In conclusion, combining Lemma~\ref{S-of-H_V3} and Equations \eqref{nodal: srf-to-crv} and \eqref{nodal: crv-to-pnt} we get 
\[
\delta_p(X) \geq \mathrm{min} \left\{\frac{176}{161},\frac{176}{161},\frac{33}{14}\right\}=\frac{176}{161}.
\]
\end{proof}

\subsubsection{ Cuspidal Curve}\phantom.

Suppose the point $p$ on $X$ is such that $C \subset S$ is cuspidal at the point $\beta(p)$. Similar to the previous subsection, we use Theorem \ref{blow-up-of-surface-formula} to obtain an estimate to $\delta_p(X)$.

Let $\sigma:\widehat{S}\to S$ be the $(2,3)$-weighted blow up
of $S$ at the point $p$ with exceptional divisor $G$. The strict transform $\widehat{C}$ of $C$ in $\widehat S$ intersects the exceptional curve $G$ in one regular point.
The following hold:
\begin{align*}
\widehat{C}=\sigma^*(C)-6G,\quad\quad K_{\widehat{S}}=\sigma^*(K_S)+4G,\mbox{ and}\\
G^{2}=-\frac{1}{6},\qquad \widehat C\cdot G=1,\qquad \widehat C^{2}=-3.
\end{align*}
We note that $G$ has two singular points, we denote by $p_0$ the one of type $\frac{1}{2}(1,1)$ and by $p_1$ the one of type $\frac{1}{3}(1,1)$. In particular, the different $\Delta_G$ defined by:
\begin{align*}
(K_{\widehat S} + G)\vert_G = K_{G} + \Delta_G\quad \mbox{ is given by }\quad \Delta_G = \frac{1}{2}p_0 + \frac{2}{3}p_1.
\end{align*}

\begin{proposition} \label{prop: cusp}
Suppose the point $p \in X \backslash (\tilde{Q} \cup E)$ is a cusp of the
tangent hyperplane section to the general hyperplane section of $V_3$ containing $\beta(p)$, then 
\begin{align*}
    \delta_p(X) \geq \frac{220}{207}.
\end{align*}
\end{proposition}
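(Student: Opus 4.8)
The plan is to apply Theorem~\ref{blow-up-of-surface-formula} to the flag consisting of the point $p$, the weighted exceptional curve $G$ of $\sigma$, and the surface $S$, exactly as in the nodal case of Proposition~\ref{prop: nodal}, but now keeping careful track of the two singular points $p_0,p_1\in G$ and of the non-trivial different $\Delta_G=\tfrac12 p_0+\tfrac23 p_1$. Since $S$ does not contain $\beta(\widetilde Q)$, the restriction $P(u)\vert_S$ is the one read off from Lemma~\ref{S-of-H_V3}, namely $(2-u)C$ for $u\in[0,1]$ and $(4-3u)C$ for $u\in[1,\tfrac43]$, and the third entry of the estimate is controlled by that lemma, giving $1/S_X(S)=\tfrac{33}{14}$, comfortably larger than the claimed bound. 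So everything reduces to the surface-to-curve and the curve-to-point terms.

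First I would compute the Zariski decomposition of $\sigma^*(P(u)\vert_S)-vG$ on the singular surface $\widehat S$. Writing $\widehat C=\sigma^*C-6G$ and using $\widehat C^2=-3$, $\widehat C\cdot G=1$, $G^2=-\tfrac16$, the divisor stays nef until $\widehat C\cdot(\sigma^*(P(u)\vert_S)-vG)=0$, after which $\widehat C$ enters the negative part; this splits each $u$-range into two $v$-sub-ranges, with transition at $v=6-3u$ (resp.\ $v=3(4-3u)$). From the resulting volume I would evaluate \eqref{Rmk1.7.32-surface-to-curve}. Because $p\notin E$, the negative part $N(u)\vert_S$ misses $p$, so $\mathrm{ord}_G(\sigma^*(N(u)\vert_S))=0$ and only the volume integral survives. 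I expect $S(V^S_{\bullet,\bullet};G)=\tfrac{207}{44}$; together with $A_S(G)=1+\mathrm{ord}_G(K_{\widehat S}-\sigma^*K_S)=5$ this yields the surface-to-curve quotient $\tfrac{A_S(G)}{S(V^S_{\bullet,\bullet};G)}=\tfrac{220}{207}$, which is precisely the bound in the statement.

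It then remains to check that every curve-to-point quotient $\tfrac{1-\mathrm{ord}_q\Delta_G}{S(W^{S,G}_{\bullet,\bullet,\bullet};q)}$ is at least $\tfrac{220}{207}$. The $q$-independent summand of \eqref{Rmk1.7.32-curve-to-point} comes from $(\widetilde P(u,v)\cdot G)$, which equals $\tfrac v6$ on the nef sub-ranges and $2-u-\tfrac v6$ (resp.\ $(4-3u)-\tfrac v6$) on the sub-ranges where $\widehat C$ has been peeled off. Since $N(u)\vert_S$ avoids $p$, the term $N'_{\widehat S}(u)\vert_G$ vanishes, so the correction $F_q$ of \eqref{Rmk1.7.32-FP} is supported only at $q=G\cap\widehat C$, where $\widetilde N(u,v)\vert_G$ has positive order. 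I would therefore distinguish four positions of $q$: a general point and the singular points $p_0,p_1$ (all with $F_q=0$, giving $S(W^{S,G}_{\bullet,\bullet,\bullet};q)=\tfrac{23}{88}$), and the regular point $G\cap\widehat C$ (where $F_q=\tfrac{23}{88}$, giving $S=\tfrac{23}{44}$). Feeding in the different, the four quotients are $\tfrac{88}{23}$, $\tfrac{1/2}{23/88}=\tfrac{44}{23}$, $\tfrac{1/3}{23/88}=\tfrac{88}{69}$ and $\tfrac{44}{23}$, whose minimum $\tfrac{88}{69}$ exceeds $\tfrac{220}{207}$. Combining with Theorem~\ref{blow-up-of-surface-formula} gives $\delta_p(X)\ge\min\{\tfrac{220}{207},\tfrac{88}{69},\tfrac{33}{14}\}=\tfrac{220}{207}$.

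The main obstacle is the bookkeeping on the singular surface $\widehat S$: one must pin down which extremal curve is contracted in each regime of the Zariski decomposition, and, more delicately, incorporate the different $\Delta_G$ at $p_0$ and $p_1$ correctly. The whole point is that although these singular points shrink the numerator $1-\mathrm{ord}_q\Delta_G$ down to $\tfrac12$ and $\tfrac13$, the value $S(W^{S,G}_{\bullet,\bullet,\bullet};q)=\tfrac{23}{88}$ at those points is small enough that the resulting quotients remain above the binding surface-to-curve bound $\tfrac{220}{207}$.
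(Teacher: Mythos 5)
Your proposal is correct and follows essentially the same route as the paper's own proof: the $(2,3)$-weighted blow-up flag with $\widehat C=\sigma^*C-6G$, the Zariski decomposition breaking at $v=6-3u$ (resp.\ $v=12-9u$), the values $A_S(G)=5$, $S(V^{S}_{\bullet,\bullet};G)=\tfrac{207}{44}$, $S(W^{S,G}_{\bullet,\bullet,\bullet};q)=\tfrac{23}{88}+F_q$ with $F_q=\tfrac{23}{88}$ only at the regular point $G\cap\widehat C$, and the four quotients $\tfrac{88}{23},\tfrac{44}{23},\tfrac{88}{69},\tfrac{44}{23}$ all match the paper, giving the same conclusion $\delta_p(X)\geq\min\{\tfrac{220}{207},\tfrac{88}{69},\tfrac{33}{14}\}=\tfrac{220}{207}$. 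No gaps beyond routine integral evaluations that the paper likewise leaves as direct computations.
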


\begin{proof}
We apply Theorem \ref{blow-up-of-surface-formula} to the flag consisting of $p$, the exceptional curve $G$ and the strict transform of the general hyperplane section of $V_3$ through $\beta(p)$.

We start by computing $S(V_{\bullet,\bullet}^{\widehat{S}},G)$. We consider the linear system 
\begin{align*}
    \sigma^*(P(u)\vert_{S})-vG=
    \begin{cases}
    (2-u)\widehat{C}+(12-6u-v)G &\text{if } u\ \in[0,1],\\
    (4-3u)\widehat{C}+(24-18u-v)G&\text{if } u\ \in[1,\frac{4}{3}].
    \end{cases}
\end{align*}

Its Zariski decomposition has positive part given by
\begin{align*}
\tilde P(u,v) =
\begin{cases}
(2-u)\widehat C + (12-6u-v)G &\text{ if } u\in [0,1], v\in [0,6-3u];\\
(12-6u-v)(\frac{1}{3}\widehat C + G) &\text{ if } u\in [0,1], v\in [6-3u,12-6u];\\
(4-3u)\widehat C + (24-18u-v)G &\text{ if } u\in [1,\frac{4}{3}], v\in [0,12-9u];\\
(8-6u-\frac{v}{3})(\widehat C + 3G) &\text{ if } u\in [1,\frac{4}{3}], v\in [12-9u, 24-18u].
\end{cases}
\end{align*}
and negative given by:
\begin{align*}
\tilde N(u,v) =
\begin{cases}
0 &\text{ if } u\in [0,1], v\in [0,6-3u];\\
(u-2+\frac{v}{3})\widehat C &\text{ if } u\in [0,1], v\in [6-3u,12-6u];\\
0 &\text{ if } u\in [1,\frac{4}{3}], v\in [0,12-9u];\\
(\frac{v}{3}+3u-4)\widehat C &\text{ if } u\in [1,\frac{4}{3}], v\in [12-9u, 24-18u].
\end{cases}
\end{align*}
Note that $N_S(u)\vert_S=0$ for $u \in [0,1]$ and  $\mathrm{ord}_G(N_S(u)\vert_S)=\mathrm{ord}_G((u-1)E\vert_S)=0$ for $u \in [1,\frac{4}{3}]$ since by assumption $p \notin E$. Therefore the value in Equation~\eqref{Rmk1.7.32-surface-to-curve} 
\begin{align}\label{cuspidal: srf-to-crv}
    S(V_{\bullet,\bullet}^{S},G)=\frac{207}{44},\quad \mbox{ and thus }\quad \frac{A_{S}(G)}{S(V_{\bullet,\bullet}^{S},G)}=\frac{220}{207},
\end{align}  since $A_{S}(G)=1+\mathrm{ord}_G(K_{S}-\sigma^*(K_S))=5$.

We now compute $S(W_{\bullet,\bullet,\bullet}^{S,G};q)$ for various points $q\in G$. To compute the value in formula \eqref{Rmk1.7.32-curve-to-point} we notice that the first term is independent of the position of $q\in G$, while $F_{q}:=F_q(W_{\bullet,\bullet,\bullet}^{S,G})$ varies, so we split in cases.
We notice that $\mathrm{ord}_q(N_S'(u)\vert_{G})=0$ for any point $q\in G$, since $N_{S}(u)$ is a multiple of $E$ and $p\not\in E$ by assumption. Also, $\tilde N(u,v)$ is a multiple of $\widehat C$, hence $F_{q}\not=0$ only for $q = G\cap\widehat C$.
We have $S(W^{S,G}_{\bullet,\bullet,\bullet}; q) = \frac{23}{88} + F_{q}$ 
and we get the cases: 
\begin{itemize}
\item $q=p_{0}$, so $\mathrm{ord}_{p_0}(\Delta_G)=\frac{1}{2}$ and
\begin{align*}
\frac{1-\mathrm{ord}_q(\Delta_G)}{S(W^{ S,G}_{\bullet,\bullet,\bullet}; q)} = \bigg(1-\frac{1}{2}\bigg)\cdot \frac{88}{23} = \frac{44}{23};
\end{align*}

\item $q=p_{1}$, so $\mathrm{ord}_{p_1}(\Delta_G)=\frac{2}{3}$ and
\begin{align*}
\frac{1-\mathrm{ord}_q(\Delta_G)}{S(W^{S,G}_{\bullet,\bullet,\bullet}; q)} = \bigg(1-\frac{2}{3}\bigg)\cdot \frac{88}{23} = \frac{88}{69};
\end{align*}

\item $q=\widehat C\cap G$, so $\mathrm{ord}_{q}(\Delta_G)=0,\ F_{q}=\frac{23}{88}$ 
and
\begin{align*}
\frac{1-\mathrm{ord}_q(\Delta_G)}{S(W^{ S,G}_{\bullet,\bullet,\bullet}; q)} = \frac{1}{\frac{23}{88} + \frac{23}{88}} = \frac{44}{23};
\end{align*}

\item $q\not\in\{p_{0},p_{1},\widehat C\cap G\}$, so $\mathrm{ord}_{p_q}(\Delta_G)=0$ and
\begin{align*}
\frac{1-\mathrm{ord}_q(\Delta_G)}{S(W^{ S,G}_{\bullet,\bullet,\bullet}; q)} = \frac{88}{23}.
\end{align*}
\end{itemize}
Therefore,
\begin{align}\label{cuspidal: crv-to-pnt}
\min_{q\in G}\frac{1-\mathrm{ord}_q\Delta_{G}}{S(W_{\bullet,\bullet,\bullet}^{S,G};q)}=\min\left\{\frac{88}{23},\frac{44}{23},\frac{88}{69},\frac{44}{23}\right\}=\frac{88}{69}.
\end{align}

In conclusion, by Lemma~\ref{S-of-H_V3} and Equations~\eqref{cuspidal: srf-to-crv} and \eqref{cuspidal: crv-to-pnt} we have:
\[
\delta_p(X) \geq \mathrm{min} \left\{\frac{33}{14},\frac{220}{207},\frac{88}{69}\right\}=\frac{220}{207}.
\]
\end{proof}

\subsubsection{Three lines}\phantom.

Suppose the point $p \in X$ is such that the curve $C \subset S$ containing $\beta(p)$ is a union of 3 lines that intersect at $\beta(p)$. Then, unlike the previous 2 cases, blowing up the surface $S$ in $X$ does not prove useful in giving a good estimate to $\delta_p(X)$ and therefore, we will use the notion of infinitesimal flags over $X$. 

Let $\pi:\hat{X} \to X$ be the blow up of the 3-fold $X$ at the point $p$, with the exceptional divisor given by $G$ and the strict transform of the surface $S$ given by $\hat{S}$.
Since $-K_{X}=\beta^*(2S)-\tilde{Q}$ and $-K_{\hat{X}}=\pi^*(-K_X)-2G$, the divisor
\begin{align}\label{eq: canonical blow-up cubic cone}
\pi^*(-K_X)-uG=\frac{4}{3}\hat{S}+\frac{1}{3}\hat{E}+(4-u)G
\end{align}
where we also use $\hat{S}=\pi^*(S)-3G$ and $\tilde{Q}=\frac{2}{3}S-\frac{1}{3}E$.

\begin{lemma}\label{lemma:bigness of divisor}
The pseudo-effective threshold $\tau$ of the linear system $\pi^*(-K_X)-uG$ is $\tau=4$.
\end{lemma}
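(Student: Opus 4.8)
The plan is to prove the two inequalities $\tau\geq 4$ and $\tau\leq 4$ separately, exactly in the spirit of Lemma~\ref{lemma:bigness of divisor II}. The lower bound is immediate: by Equation~\eqref{eq: canonical blow-up cubic cone} the divisor $\pi^*(-K_X)-uG=\frac{4}{3}\hat S+\frac{1}{3}\hat E+(4-u)G$ is a non-negative combination of the prime divisors $\hat S$, $\hat E$ and $G$ for every $u\in[0,4]$, hence effective and in particular pseudoeffective, so $\tau\geq 4$.

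For the reverse inequality I would reduce everything to showing that the boundary divisor $\pi^*(-K_X)-4G=\frac{4}{3}\hat S+\frac{1}{3}\hat E$ is \emph{not} big. This suffices because $\pi^*(-K_X)$ is big and nef and $u\mapsto\mathrm{vol}(\pi^*(-K_X)-uG)$ is strictly positive on $[0,\tau)$; thus $\tau>4$ would force $\pi^*(-K_X)-4G$ to be big, a contradiction. To establish non-bigness I would push the divisor forward along a birational contraction, using that the pushforward of a big divisor along a birational morphism is big. Since $p\notin E$ we have $\alpha(p)\notin\mathscr C$, so the two blow-up centres are disjoint and $\hat X$ is also the blow-up of $\Bl_{\alpha(p)}\mathbb{P}^3$ along the (strict transform of the) curve $\mathscr C$; consequently the divisor $\hat E$ can be contracted by a birational morphism $\gamma\colon\hat X\to \Bl_{\alpha(p)}\mathbb{P}^3$, precisely as in Lemma~\ref{lemma:bigness of divisor II}.

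Writing $H'$ for the pullback of $\mathcal O_{\mathbb{P}^3}(1)$ and $G'$ for the exceptional divisor of $\Bl_{\alpha(p)}\mathbb{P}^3\to\mathbb{P}^3$, I would record the identities $\gamma^*H'=\pi^*H$, $\gamma^*G'=G$ and $\gamma_*\hat E=0$, and then compute $\gamma_*\big(\pi^*(-K_X)-4G\big)=\gamma_*\big(4\pi^*H-\hat E-4G\big)=4(H'-G')$. Finally, $\Bl_{\alpha(p)}\mathbb{P}^3\to\mathbb{P}^2$ is the resolution of the projection from $\alpha(p)$, and $H'-G'$ is the pullback of $\mathcal O_{\mathbb{P}^2}(1)$ along it; being a pullback from a surface it is not big, and neither is its multiple $4(H'-G')$. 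Hence $\pi^*(-K_X)-4G$ is not big, giving $\tau\leq 4$ and therefore $\tau=4$.

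The routine part is the numerical pushforward identity; the step requiring genuine care is the geometric justification that $\hat E$ is contractible with target $\Bl_{\alpha(p)}\mathbb{P}^3$, i.e. that the centres $\mathscr C$ and $\alpha(p)$ are disjoint (which is where the hypothesis $p\notin E$ enters) and that the resulting contraction identifies the pushforward with a multiple of the projection class. Once this is in place, recognising $H'-G'$ as a pullback from $\mathbb{P}^2$ makes the non-bigness transparent.
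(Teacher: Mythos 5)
Your proof is correct, and it follows the same overall strategy as the paper's: the lower bound $\tau\geq 4$ read off from Equation~\eqref{eq: canonical blow-up cubic cone}, the reduction of $\tau\leq 4$ to non-bigness of the divisor at $u=4$, and the pushforward along the contraction $\gamma\colon\hat X\to \Bl_{\alpha(p)}\mathbb P^3$ of $\hat E$ (the paper even leaves the reduction step implicit, which you spell out correctly via convexity of the big cone). Where you genuinely diverge is in the final non-bigness verification. The paper keeps the decomposition $\tfrac43\hat S+\tfrac13\hat E$, pushes it forward to a multiple of $\gamma_*\hat S$, and then invokes the geometry of $S$: in the three-lines case $S$ is a cubic cone over an elliptic curve with vertex at $\alpha(p)$ (this is what makes $\hat S=\pi^*S-3G$ hold), so its strict transform is $h^*\mathcal O_{\mathbb P^2}(3)$ under the projection $h$ and hence not big. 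You instead forget $S$ entirely and compute with classes: $\gamma_*\bigl(4\pi^*H-\hat E-4G\bigr)=4(H'-G')=h^*\mathcal O_{\mathbb P^2}(4)$, which is not big because it is a pullback from a surface. The two computations agree (as they must, since $\gamma_*\hat S=3(H'-G')$), but your version is more robust: it does not depend on knowing that $S$ is a cone over an elliptic curve, a fact the paper uses without proof and which is exactly the delicate point of the three-lines hypothesis. What the paper's route buys in exchange is the geometric description of $\hat S$, which it explicitly reuses in the proof of Proposition~\ref{prop: three lines} (``From the proof of Lemma~\ref{lemma:bigness of divisor}, we know that $\hat S$ is a cone over an elliptic curve''); your argument would leave that description to be established separately.
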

\begin{proof}
From Equation~\eqref{eq: canonical blow-up cubic cone} we clearly have that $\tau\geq 4$. In order to prove the equality we show that the divisor $4\hat S + \hat E$ is not big. For this, let $\gamma\colon \hat X\to \Bl_{\alpha(p)}\mathbb P^3$ be the divisorial contraction of $\hat E$. Since the pushforward of a big divisor along a birational morphism is big, in order to show the claim it is enough to show that $\gamma_{*}\hat S$ is not big. For this, notice that $\Bl_{\alpha(p)}\mathbb P^{3}$ is the resolution of indeterminacy of the projection from $\alpha(p)$ and is a conic bundle $h\colon\Bl_{\alpha(p)}\mathbb P^{3}\to \mathbb P^2$ which contracts $\gamma(\hat S)$ to an elliptic curve. In particular $\gamma(\hat S) = h^*\mathcal O_{\mathbb P^2}(3)$ is not big. The claim is proven.
\end{proof}

\begin{proposition} \label{prop: three lines}
Suppose $p \in X \backslash (\tilde{Q} \cup E)$ is such that $p$ is the Eckardt point of curve $C$ given by the tangent hyperplane section to the general hyperplane section of $V_3$ containing $\beta(p)$. Then \[\delta_p(X) = \frac{22}{17}\] and it is computed by the exceptional divisor $G$ corresponding to the ordinary blowup of $X$ at $p$.
\end{proposition}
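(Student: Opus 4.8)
The strategy mirrors Proposition~\ref{prop: p vertex}: the plan is to use the divisor $G$ over $X$ as the candidate computing $\delta_p(X)$ and to feed it into the Abban--Zhuang inductive estimate of \cite[Corollary~4.18~(2)]{Kentorank3deg28},
\[
\frac{A_X(G)}{S_X(G)} \geq \delta_p(X) \geq \min\left\{\frac{A_X(G)}{S_X(G)},\ \inf_{q\in G}\delta_q\big(G,\Delta_G;V^{G}_{\bullet,\bullet}\big)\right\}.
\]
Since the two outer terms coincide, it suffices to show that $A_X(G)/S_X(G)=\tfrac{22}{17}$ and that the surface term on the right is bounded below by $\tfrac{22}{17}$; the left inequality then forces equality and exhibits $G$ as the divisor computing $\delta_p(X)$.

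First I would compute $A_X(G)/S_X(G)$. As $G$ is the exceptional divisor of the ordinary blow-up of the smooth threefold $X$ at a point, $A_X(G)=3$. Using Lemma~\ref{lemma:bigness of divisor}, which gives $\tau=4$, together with Equation~\eqref{eq: canonical blow-up cubic cone}, I would determine the Zariski decomposition of $\pi^{*}(-K_X)-uG$: the class $\tfrac{4}{3}\hat S+\tfrac{1}{3}\hat E+(4-u)G$ is nef for small $u$, and past a threshold the strict transform $\hat S$ detaches into the negative part $N(u)=c(u)\hat S$, exactly as $\hat Q$ did in the vertex computation. Integrating $P(u)^3$ over $[0,4]$ against $(-K_X)^3=22$ should yield $S_X(G)=\tfrac{51}{22}$ and hence $A_X(G)/S_X(G)=\tfrac{22}{17}$.

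The heart of the argument is the surface term on $G\cong\mathbb P^2$. Here $\hat S=\pi^{*}S-3G$ restricts to a plane cubic $\hat S|_G$ which, for the Eckardt configuration, is the union $L_1+L_2+L_3$ of the three concurrent lines (the projectivised tangent cone), and it carries all of the negative part, $N(u)|_G=c(u)(L_1+L_2+L_3)$. For a point $q\notin \hat S|_G$ I would take a general line flag $q\in L\subset G$; since the negative part is not supported at $q$ the term $F_q$ vanishes and the quotient sits well above $\tfrac{22}{17}$. For $q$ on a single branch $L_i$ the line flag picks up one copy of $c(u)$ in $F_q$, which I would check still returns a value $\geq\tfrac{22}{17}$. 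The delicate case is the singular locus of the cubic $\hat S|_G$, above all the point $L_1\cap L_2\cap L_3$ where the three branches pile up and the line flag ceases to be adequate.

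The main obstacle is therefore the analysis at the triple point of $\hat S|_G$. As in the vertex case, where points on the conic $\hat Q|_G$ forced a $(1,2)$-weighted blow-up, I would resolve this point by a further blow-up $\eta\colon\hat G\to G$ and invoke \cite[Corollary~4.18~(1)]{Kentorank3deg28}, tracking the strict transforms of $L_1,L_2,L_3$ and the different $\Delta_F$ introduced on the new exceptional curve $F$. The concurrency of the three lines, which is precisely what singles out the Eckardt locus, is what makes the orders of vanishing of $\eta^{*}N(u)|_F$ and $\tilde N(u,v)|_F$ combine so that every point $q'\in F$ yields a quotient at least $\tfrac{22}{17}$. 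Once this is verified the surface term is $\geq \tfrac{22}{17}$, and combining it with the previous paragraph gives $\delta_p(X)\geq\tfrac{22}{17}$; together with the matching upper bound this proves $\delta_p(X)=\tfrac{22}{17}$, computed by $G$.
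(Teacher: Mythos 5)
Your outer skeleton agrees with the paper's: both arguments run \cite[Corollary~4.18~(2)]{Kentorank3deg28} for the ordinary blow-up $\pi\colon\hat X\to X$ at $p$, compute $A_X(G)/S_X(G)=3\big/\tfrac{51}{22}=\tfrac{22}{17}$ from a Zariski decomposition whose negative part is a multiple of $\hat S$ (in the paper, $N(u)=\tfrac{u-2}{2}\hat S$ for $u\in[2,4]$), and then bound $\inf_{q\in G}\delta_q(G,\Delta_G;V^G_{\bullet,\bullet})$ from below. The gap is in the surface-level analysis, which rests on a false geometric premise. Your own identity $\hat S=\pi^*S-3G$ forces $\mathrm{mult}_p S=3$, so the surface entering the computation cannot be the general (smooth) hyperplane section of $V_3$ from the statement; it is the strict transform of the \emph{tangent} hyperplane section $V_3\cap T_{\beta(p)}V_3$, which in the Eckardt case is a cone with vertex $p$ over a smooth plane cubic --- this is exactly what the proof of Lemma~\ref{lemma:bigness of divisor} records when it says $\gamma(\hat S)$ is the preimage of an elliptic curve. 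Consequently $\hat S\vert_G$ is that smooth elliptic curve $Z\subset G\simeq\mathbb P^2$, \emph{not} the union $L_1+L_2+L_3$: the three concurrent lines are rulings of the cone, they are not contained in $G$, and their strict transforms meet $G$ in three points lying on $Z$.

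Because of this, your case division fails exactly where it matters. There is no triple point of $\hat S\vert_G$, so the ``delicate case'' you plan to treat by a further blow-up $\eta\colon\hat G\to G$ and \cite[Corollary~4.18~(1)]{Kentorank3deg28} is vacuous; and even on its own terms that step is only asserted (``the concurrency \ldots\ is what makes the orders \ldots\ combine''), not proved. Conversely, the case you defer with ``I would check still returns a value $\geq\tfrac{22}{17}$'' --- a point $q$ on $\hat S\vert_G$ --- is precisely the binding case: for a line flag $L$ transverse to $Z$ one gets $\mathrm{ord}_q\big(N'_G(u)\vert_L\big)=\tfrac{u-2}{2}$ when $q\in Z$, hence
\begin{align*}
S\big(W^{G,L}_{\bullet,\bullet,\bullet};q\big)=\tfrac{23}{44}+\tfrac14=\tfrac{17}{22}
\quad\text{for } q\in Z,
\qquad
S\big(W^{G,L}_{\bullet,\bullet,\bullet};q\big)=\tfrac{23}{44}
\quad\text{for } q\notin Z,
\end{align*}
so the infimum over $q\in G$ is $\min\left\{\tfrac{44}{23},\tfrac{22}{17}\right\}=\tfrac{22}{17}$ and the proof closes with line flags alone, no infinitesimal flag over $G$ being needed. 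To repair your write-up you must correct the identification of $\hat S\vert_G$, delete the extra blow-up machinery, and actually carry out the line-flag computation at points of $Z$.
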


\begin{proof}
  By \cite[Corollary 4.18 (2)]{Kentorank3deg28}, we have
\begin{equation} \label{eq:AZmain2}
 \frac{A_X(G)}{S_X(G)} \geq \delta_p(X) \geq  \min \bigg\{\frac{A_X(G)}{S_X(G)},
\inf_{\substack{q \in G}} \delta_q(G,\Delta_G;V^{G}_{\bullet,\bullet}) \bigg\},
\end{equation} where the infimum runs over all points $q \in G$.

We first compute the left hand side of inequality \ref{eq:AZmain2} and prove that the right hand side is bounded below by $\frac{A_X(G)}{S_X(G)}$. From the proof of Lemma \ref{lemma:bigness of divisor}, we know that $\hat{S}$ is a cone over an elliptic curve. Let $L$ be the class of a line in $\hat S$, then:
\begin{align*}
G\cdot L = 1 \quad \hat E\cdot L = 2 \quad \mbox{and}\quad \hat S\cdot L = -2.
\end{align*}

Moreover,
\begin{align*}
 \hat S^2\cdot E = 6, \quad  \hat S\cdot G^{2} = -3,\quad  \hat S^2 \cdot G = 9, \quad G^2\cdot \hat E = G \cdot \hat E^2 = 0,\\
 \hat E^{3} = -30\quad  \hat S\cdot E^{2} = 12,\quad \hat S_{x}^{3} = -24, \quad \hat S \cdot \hat E \cdot G = 0,\quad G^{3} = 1. 
\end{align*}

Let $P(u)$ and $N(u)$ be the positive and negative part of $\pi^{*}(-K_X) - uG$. We have:
\begin{align*}
P(u)=
\begin{cases}
\frac{1}{3}\hat E + \frac{4}{3}\hat S + (4-u)G & \text{if}\ u\in[0,2],\\
\frac{1}{3}\hat E +\left( \frac{7}{3}-\frac{u}{2} \right)\hat S + (4-u)G & \text{if}\ u\in[2,4],
\end{cases}
\mbox{ and }
N(u)=
\begin{cases}
0 & \text{if}\ u\in [0,2],\\
\frac{(u-2)}{2}\hat S & \text{if}\ u\in [2,4],
\end{cases}
\end{align*}

and since $-K_{\hat{X}}=\pi^*(-K_X)-2G$, we have that $A_X(G)=3$ and $S_X(G)=\frac{51}{22}$ 
, so that
\[
\frac{A_X(G)}{S_X(G)}=\frac{22}{17}.
\]

We now estimate $\inf_{\substack{q \in G}} \delta_q(G,\Delta_G;V^{G}_{\bullet,\bullet})$. 
For every point $q \in G$, we choose the flag $q \in L \subset G$, where $L$ is a line in $G$ intersecting $\hat S\vert_G$ transversely. Then,  by \cite[Theorem~3.2]{AZ21}
\begin{align*}
\delta_q (G,\Delta_G; W^G_{\bullet,\bullet})
\geq \mathrm{min} \left\{ \frac{1}{S(W_{\bullet,\bullet}^G;L)},\frac{1-\mathrm{ord}_q{\Delta_L}}{S(W_{\bullet,\bullet,\bullet}^{G,L};q)} \right\}.
\end{align*}
 Let $P(u,v)$ and $N(u,v)$ be the positive and negative part of $P(u)\vert_G-vL$. These are given by 
\begin{align*}
    P(u,v)=
    \begin{cases}
    (u-v)L &\text{if}\ u\in [0,2],\ v\in [0,u],\\
    \Big(\frac{6-u}{2}-v\Big)L &\text{if}\ u\in [2,4],\ v\in [0, \frac{6-u}{2}],
    \end{cases}\quad
    \text{ and}\quad  N(u,v) = 0.
\end{align*}

 Notice that $\mathrm{ord}_L(N(u)\vert_{G})=0$ since $\hat S\vert_G$ is not supported on $L$. Then, 
\[
\frac{1}{S(W_{\bullet,\bullet}^G;L)}=\frac{44}{23}.
\]
Let $Z$ be the elliptic curve $\hat S\vert_G$. Then,
\begin{align*}
\mathrm{ord}_{q}(N'_{ G}(u)\vert_{L}+N(u,v)\vert_L)&=\mathrm{ord}_{q}(N'_{ G}(u)\vert_{L})\\ 
&=\mathrm{ord}_{q}\bigg(\frac{u-2}{2}Z\vert_L\bigg)\\
&=\begin{cases}
0 & \text{if}\ q \not \in Z\vert_L  ,\\
\frac{u-2}{2}  & \text{otherwise}.
\end{cases}
\end{align*}
Then,
\[
S(W^{G,L}_{\bullet,\bullet,\bullet};q)= 
\begin{cases}
\frac{23}{44} & \text{if}\ q \not \in Z\vert_L  ,\\
\frac{17}{22} & \text{if}\ q \in Z\vert_L .
\end{cases}
\]
Hence,
\[\inf_{\substack{q \in G}} \delta_q(G,\Delta_G;V^{G}_{\bullet,\bullet}) \geq \min \bigg\{\frac{44}{23}, \min \Big\{\frac{44}{23},\frac{22}{17} \Big\} \bigg\}=\frac{22}{17}.
\]
The claim follows.
\end{proof}

\subsection{ Estimate of \texorpdfstring{$\delta_p$ for a point $p$ in $E$}.}

We now estimate $\delta_p(X)$ where  $p\in E$. Let $H\subseteq \mathbb P^{3}$ be a general hyperplane containing $\alpha(p)$. Then, recall that $H$ intersects the curve $\mathscr C$ in six points, which we denote $p_{1}:=\alpha(p), p_2,\ldots, p_6$, lying on the conic $C:=Q\cap H$. Let $S$ be the strict transform of $H$. The morphism $S\to H$ is the blow-up of $H\simeq \bbP^{2}$ in the six points $p_{1}, p_2,\ldots, p_6$ and we denote by $E_{i}$ the associated exceptional divisors. Let $l$ be the strict transform of a line in $H$. 
\begin{proposition} \label{prop: p in E}
If $p \in  E \cap \tilde Q$,  then
\begin{align*}
    \delta_p(X) \geq \frac{132}{131}.
\end{align*}
If $p \in  E \setminus \tilde Q$, then
\begin{align*}
    \delta_p(X) \geq \frac{66}{65}.
\end{align*}
\end{proposition}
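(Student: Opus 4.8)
The plan is to estimate $\delta_p(X)$ for $p \in E$ using Theorem~\ref{blow-up-of-surface-formula} applied to a flag adapted to the exceptional geometry. Following the pattern established in the previous subsections, I would take $S$ to be the strict transform of a general hyperplane $H \subseteq \mathbb{P}^3$ through $\alpha(p)$; this $S$ is the blow-up of $\mathbb{P}^2$ in the six points $p_1,\ldots,p_6$ lying on the conic $C = Q \cap H$, with $p_1 = \alpha(p)$. The natural flag is $p \in F \subset S \subset X$ where $F$ is the exceptional curve of the blow-up $\sigma\colon \widehat{S} \to S$ at $p$ (or possibly a weighted blow-up, depending on how the special curves $E_1, \widetilde{C}$ and the lines $L_j$ through $p_1$ behave at $p$). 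First I would compute $S_X(S)$ via the Zariski decomposition of $-K_X - uS$; here $S = H$ meets $E$, so unlike in Lemma~\ref{S-of-H_V3} the negative part will involve $E$, and I expect $\tau$ and the volume polynomial to change accordingly.

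Next I would compute the three quantities $S_X(S)$, $S(V^S_{\bullet,\bullet}; F)$ (together with $A_S(F)$), and $S(W^{S,F}_{\bullet,\bullet,\bullet}; q)$ for each relevant $q \in F$. The surface-to-curve term requires the Zariski decomposition of $\sigma^*(P(u)|_S) - vF$ on $\widehat{S}$; the negative curves to watch are $E_1 = E|_S$ (since now $p \in E$), the strict transform $\widetilde{C}$ of the conic, and the lines $L_1,\ldots,L_6$ joining $p_1$ to the other base points. Because $p \in E$, the divisor $E|_S$ now contributes nontrivially both to $\mathrm{ord}_F(\sigma^* N(u)|_S)$ in \eqref{Rmk1.7.32-surface-to-curve} and to $N'_{\widehat{S}}(u)|_F$ in the $F_q$ term \eqref{Rmk1.7.32-FP}; this is the key structural difference from the off-$E$ cases. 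I would then split into the two subcases of the statement: whether $p \in E \cap \widetilde{Q}$ or $p \in E \setminus \widetilde{Q}$, since membership in $\widetilde{Q}$ controls whether $\widetilde{Q}|_S$ (equivalently the conic $C$) passes through $p$ and thus whether $\widetilde{C}$ meets $F$, altering both the Zariski chamber structure and the local orders at the relevant points $q \in F$.

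For the curve-to-point computation I would enumerate the points $q \in F$ according to which of the distinguished curves—$E_1$, $\widetilde{C}$, the $L_j$, and their intersections with $F$—pass through $q$, compute $\mathrm{ord}_q(N'_{\widehat{S}}(u)|_F + \widetilde{N}(u,v)|_F)$ in each case, and obtain $S(W^{S,F}_{\bullet,\bullet,\bullet}; q)$ via \eqref{Rmk1.7.32-curve-to-point} and \eqref{Rmk1.7.32-FP}. The minimum over $q$ of $(1 - \mathrm{ord}_q \Delta_F)/S(W^{S,F}_{\bullet,\bullet,\bullet};q)$ will be attained at the worst such point—likely the point where both $E_1$ and either $\widetilde{C}$ (in the $p \in \widetilde{Q}$ case) or a line $L_j$ concentrate negativity—and this should yield the claimed bounds $\tfrac{132}{131}$ and $\tfrac{66}{65}$ respectively. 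Finally I would combine $1/S_X(S)$, $A_S(F)/S(V^S_{\bullet,\bullet};F)$, and $\min_q (1 - \mathrm{ord}_q\Delta_F)/S(W^{S,F}_{\bullet,\bullet,\bullet};q)$ through Theorem~\ref{blow-up-of-surface-formula}.

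The main obstacle will be correctly tracking the Zariski decomposition of $\sigma^*(P(u)|_S) - vF$ across all its chambers once $E_1$ is an active negative curve: with seven exceptional curves of the blow-up $S \to \mathbb{P}^2$ plus $\widetilde{C}$, $F$, and the lines $L_j$ on $\widehat{S}$, identifying precisely when each curve enters the negative part (and how the tangency of $C$ and the $L_j$ at $p_1$ behaves under $\sigma$) is delicate and drives the final numerology. Choosing the flag curve—an ordinary versus weighted blow-up, and the precise line $F$ or its analogue—so that the different $\Delta_F$ and the resulting $A_S(F)$ give a bound strictly above $1$ is where the estimate is tightest, since the constants $\tfrac{132}{131}$ and $\tfrac{66}{65}$ are close to the K-stability threshold.
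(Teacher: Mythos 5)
Your overall scaffolding (a general hyperplane through $\alpha(p)$, the three-term estimate, and the case split according to $p\in\tilde Q$ or not) matches the paper, but the proposal contains a genuine structural error and leaves the decisive step unverified. The error: you claim that, because $p\in E$, the negative part of $-K_X-uS$ ``will involve $E$'', that $\tau$ and the volume polynomial change, and that $E\vert_S$ contributes to $\mathrm{ord}_F(\sigma^*N(u)\vert_S)$ and to $N'_{\widehat S}(u)\vert_F$ --- calling this the key structural difference. This is false. The strict transform of a general hyperplane through $\alpha(p)$ still has class $H$, so $-K_X-uS=(4-u)H-E$ exactly as in Proposition~\ref{prop: p not vertex}: $\tau=2$, $N(u)=(u-1)\tilde Q$ for $u\in[1,2]$, and $S_X(S)=\tfrac{23}{44}$, unchanged; the paper simply reuses \eqref{eq: S-of-plane}. (You also have Lemma~\ref{S-of-H_V3} backwards: there, where $S$ has class $3H-E$, the negative part \emph{is} $(u-1)E$; for your $S$, of class $H$, it is $(u-1)\tilde Q$.) The divisor $E$ never enters any negative part here; all such contributions come from $\tilde Q\vert_S=\tilde C$, and only in the case $p\in E\cap\tilde Q$. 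Your separate remark that membership in $\tilde Q$ controls whether $\tilde C$ passes through $p$ is the correct mechanism behind the dichotomy in the statement, and it contradicts your ``key structural difference'' claim.

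The second problem is the flag itself. The paper does not blow up $S$ at $p$ at all: since $p\in E$, the fiber $E_1$ of $E$ over $p_1=\alpha(p)$ is already a $(-1)$-curve on $S$ passing through $p$, and the paper applies Theorem~\ref{delta estimate} directly to the flag $p\in E_1\subset S\subset X$, whose surface-level negative parts involve only $\tilde C$ and the five lines $L_{1,j}$, $j=2,\dots,6$ (five, not six as you write). Your infinitesimal flag via Theorem~\ref{blow-up-of-surface-formula} has $A_S(F)=2$, so you need $S(V^S_{\bullet,\bullet};F)<2$, and its Zariski chambers involve a negative curve you never list: the strict transform of the unique line through $p_1$ whose tangent direction corresponds to $p$; it is a $(-1)$-curve meeting $F$, and one checks it enters the negative part of $\sigma^*(P(u)\vert_S)-vF$ at $v=3-u$, before $\tilde C$ does when $u<\tfrac23$. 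Omitting it would give wrong volumes, and even with it included you give no evidence that your flag produces bounds exceeding $1$. Since the target constants $\tfrac{132}{131}$ and $\tfrac{66}{65}$ are razor-thin, establishing that the chosen flag clears $1$ is the entire content of the proposition, not a detail to be filled in later; as written, the proposal does not establish the statement.
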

\begin{proof}

We apply Theorem~\ref{delta estimate} to the flag:
\begin{align*}
    p \in E_1 \subset S \subset X
\end{align*}
To compute $S_X(S)$ we consider the linear system $-K_{X}-uS$ for $u \in \mathbb{R}_{\geq 0}$ which, in terms of the generators of $\mathrm{Eff}(X)$, is given by 
\begin{align*}
    \bigg(2-\frac{u}{2}\bigg)\tilde Q +\bigg(1-\frac{u}{2}\bigg)E. 
    \end{align*}
    Hence its pseudo-effective threshold is $\tau=2$. Consider the Zariski decomposition of $-K_X-uS$:
\begin{align*}
P(u) =
\begin{cases}
(4-u)H - E & \text{if}\ u\in [0,1],\\
(6-3u)H + (u-2)E & \text{if}\ u\in [1,2],
\end{cases}
\mbox{ and }
N(u) =
\begin{cases}
0\ & \text{if}\ u\in[0,1],\\
(u-1)\tilde Q\ & \text{if}\ u\in[1,2].
\end{cases}
\end{align*}
Recall that this is the same as in Proposition \ref{prop: p not vertex}, from which we have that $S_X(S)=\frac{23}{44}$ \eqref{eq: S-of-plane}. We now compute the value $S(V^{S}_{\bullet,\bullet};E_1)$. Consider the linear system 
\begin{align*}
D&=P(u)\vert_{S} - vE_{1} =
\begin{cases}
(4-u)l- \sum_{i=i}^6E_i-vE_1 & \text{if}\ u\in [0,1],\\
(6-3u)l-(2-u)\sum_{i=i}^6E_i-vE_1 & \text{if}\ u\in [1,2]
\end{cases}
\end{align*}
 We denote by $L_{i,j}$ the strict transform of the line through the points $p_i,p_j$. Its Zariski decomposition for $u\in [0,1]$ is
\begin{align*}
P =
\begin{cases}
D & \text{if}\ v\in [0,2-2u],\\
D-a\tilde{C} & \text{if}\ v\in [2-2u,2-u],\\
D-a\tilde{ C}-b\sum_{j=2}^6L_{1,j} & \text{if}\ v\in [2-u,\frac{8-4u}{3}]
\end{cases}
\mbox{ and }
N =
\begin{cases}
0& \text{if}\ v\in [0,2-2u],\\
a\tilde{C} & \text{if}\ v\in [2-2u,2-u],\\
a\tilde{ C}+b\sum_{j=2}^6L_{1,j} & \text{if}\ v\in [2-u,\frac{8-4u}{3}],
\end{cases}
\end{align*} 

where $a=\frac{v}{2}+u-1$ and $b=v+u-2$ and for $u\in [1,2]$ is
\begin{align*}
P =
\begin{cases}
D-a\tilde{ C} & \text{if}\ v\in [0,2-u],\\
D-a\tilde{ C}-b\sum_{j=2}^6L_{1,j} & \text{if}\ v\in [2-u,\frac{8-4u}{3}]
\end{cases}
\mbox{ and }
N =
\begin{cases}
a\tilde{C} & \text{if}\ v\in [0,2-u],\\
a\tilde{ C}+b\sum_{j=2}^6L_{1,j} & \text{if}\ v\in [2-u,\frac{8-4u}{3}]
\end{cases}
\end{align*}
where $a=\frac{v}{2}$ and $b=v+u-2$. Hence, the volume of the divisor $D$ for $u\in [0,1]$ is 
\begin{align*}
\mathrm{vol}(D)= P^2=
\begin{cases}
u^2-v^2-8u-2v+10& \text{if}\ v\in [0,2-2u],\\
12-4v-12u-\frac{1}{2}v^2+2vu+3u^2 & \text{if}\ v\in [2-2u,2-u],\\
\frac{1}{2}(4u+3v-8)^2  & \text{if}\ v\in [2-u,\frac{8-4u}{3}],
\end{cases}
\end{align*}
and for $u\in [1,2]$ is
\begin{align*}
\mathrm{vol}(D)= P^2=
\begin{cases}
12-4v-12u-\frac{1}{2}v^2+2vu+3u^2 & \text{if}\ v\in [0,2-u],\\
\frac{1}{2}(4u+3v-8)^2  & \text{if}\ v\in [2-u,\frac{8-4u}{3}].
\end{cases}
\end{align*}

We note that $\tilde Q \vert_{S} = \tilde C$ which has no support on $E_1$. Hence the negative part $N_{S}(u)$ does not contribute in the formula \eqref{surface-to-curve} and we get:
\begin{align} \label{Flag3:surface-to-curve}
    S(V^{S}_{\bullet,\bullet};E_1) = \frac{65}{66}.
\end{align}

We now compute $S(W_{\bullet,\bullet,\bullet}^{S,E_1};p)$. Notice that the hyperplane $H$ can be chosen so that $p$ does not lie in any of the $L_{1,j}$. Then 
\[
\mathrm{ord}_p(N'_{S}(u)\vert_{E_1}+N\vert_{E_1})=\mathrm{ord}_p\bigg(\Big(u-1+\frac{v}{2}\Big)\tilde C\vert_{E_1}\bigg)=\begin{cases}
u-1+\frac{v}{2} & \text{if}\ p \in \tilde C,\\
0  & \text{if}\ p \not \in \tilde C,
\end{cases}
\]
since $E_1$ is transversal to $\tilde C$. By \eqref{Fp}, we have,
\[
F_p(W_{\bullet,\bullet,\bullet}^{S,E_1})=\begin{cases}
\frac{5}{33} & \text{if}\ p \in \tilde C,\\
0  & \text{if}\ p \not \in \tilde C.
\end{cases}
\]
By direct application of  \eqref{curve-to-point} we have
\begin{align}\label{Flag3:curve-to-point}
S(W_{\bullet,\bullet,\bullet}^{\widetilde H,E_1};p) = \begin{cases}
\frac{131}{132} & \text{if}\ p \in \tilde C,\\
\frac{37}{44}  & \text{if}\ p \not \in \tilde C.
\end{cases}
\end{align}
By Theorem~\ref{delta estimate} we have,
\begin{align}
\delta_p(X) \geq  \begin{cases}
\min \Big\{ \frac{44}{23}, \frac{66}{65}, \frac{132}{131} \Big\} = \frac{132}{131} & \text{if}\ p \in \tilde C,\\
\min \Big\{ \frac{44}{23}, \frac{66}{65}, \frac{44}{37}\Big\} = \frac{66}{65}  & \text{if}\ p \not \in \tilde C.
\end{cases}
\end{align}
\end{proof}

We can finally prove our main theorem:

\begin{theorem} \label{thm:main}
    Every smooth member of the Fano family 2.15, which is the blow-up of $\mathbb P^3$ in a curve given by the intersection of a quadric and a cubic, is K-stable.
    In particular,
    \begin{align*}
        \delta(X) \geq \frac{132}{131}.
    \end{align*}
\end{theorem}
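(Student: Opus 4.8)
The plan is to deduce the global threshold from the pointwise estimates already established, using the identity $\delta(X)=\inf_{p\in X}\delta_p(X)$ recorded in Section~\ref{section: AB Theory} together with the equivalence $\delta(X)>1\iff X\text{ is K-stable}$. Since every lower bound produced in the preceding propositions is strictly greater than one, it suffices to organise them into an exhaustive case analysis and take the minimum.

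First I would fix the dichotomy according to whether the unique quadric $Q$ containing $\mathscr C$ is smooth or a quadric cone, since the geometry of $\widetilde Q$ differs in the two situations, and then partition $X$ into the three loci $E$, $\widetilde Q$, and the open complement $X\setminus(E\cup\widetilde Q)$. For $p\in E$ (including the intersection $p\in E\cap\widetilde Q$) Proposition~\ref{prop: p in E} gives $\delta_p(X)\geq\min\{132/131,\,66/65\}=132/131$, irrespective of the type of $Q$. For $p\in\widetilde Q\setminus E$ I would invoke Proposition~\ref{delta: p in Q} in the smooth case (where $\delta_p(X)=44/37$) and Propositions~\ref{prop: p vertex} and~\ref{prop: p not vertex} in the cone case (yielding $11/10$ at the vertex and $\geq 44/43$ away from it). For $p\in X\setminus(E\cup\widetilde Q)$ the estimate depends on the singularity type of the tangent hyperplane section of the associated cubic surface, and Propositions~\ref{prop: nodal},~\ref{prop: cusp}, and~\ref{prop: three lines} cover the nodal, cuspidal, and Eckardt (three concurrent lines) cases with bounds $176/161$, $220/207$, and $22/17$ respectively.

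Taking the infimum over all points then reduces to comparing the finite list of rational numbers assembled above, whose minimum is $132/131$, arising from the locus $E\cap\widetilde Q$. Hence $\delta(X)\geq 132/131>1$, which by the Yau--Tian--Donaldson criterion recalled in Section~\ref{section: AB Theory} proves that $X$ is K-stable.

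The only genuine subtlety, rather than a real obstacle, is verifying that the case analysis is exhaustive: one must check that every point of $X$ is the centre of at least one of the flags employed, paying particular attention to the overlaps along $E\cap\widetilde Q$ and to the fact that the estimates off $\widetilde Q$ are insensitive to whether $Q$ is smooth or a cone. All the hard computational work—constructing the plt blow-ups, computing the Zariski decompositions, and evaluating the $S$-invariants—has already been carried out in the individual propositions, so the proof of the main theorem is purely a bookkeeping step.
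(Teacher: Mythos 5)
Your proposal is correct and takes essentially the same route as the paper: the paper's proof of Theorem~\ref{thm:main} is exactly this bookkeeping step, citing Propositions~\ref{delta: p in Q}, \ref{prop: p vertex}, \ref{prop: p not vertex}, \ref{prop: nodal}, \ref{prop: cusp}, \ref{prop: three lines}, \ref{prop: p in E} and taking the minimum of the resulting bounds, which is $\tfrac{132}{131}>1$, attained along $E\cap\widetilde Q$. If anything, your list is slightly more accurate than the paper's, which records $\tfrac{8}{7}$ where Proposition~\ref{prop: p not vertex} actually gives $\tfrac{44}{43}$ --- a harmless slip, since $\tfrac{44}{43}>\tfrac{132}{131}$ and the minimum is unaffected.
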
    
\begin{proof}
The local stability threshold is estimated for every point $p\in X$. In particular by Propositions~\ref{delta: p in Q}, \ref{prop: p vertex}, \ref{prop: p not vertex}, \ref{prop: nodal}, \ref{prop: cusp}, \ref{prop: three lines}, \ref{prop: p in E} one has:
\begin{align*}
    \delta(X) \geq \min \bigg\{\frac{8}{7}, \frac{11}{10}, \frac{8}{7}, \frac{176}{161},\frac{220}{207}, \frac{22}{17}, \frac{132}{131} \bigg\}= \frac{132}{131}.
\end{align*}
\end{proof}

\newpage

\newcommand{\etalchar}[1]{$^{#1}$}


\end{document}